
\documentclass[12pt,reqno]{amsart}

\usepackage[usenames]{color}

\usepackage{amsmath,epsfig}



\newtheorem{thm}{Theorem}[section]
\newtheorem{cor}[thm]{Corollary}
\newtheorem{prop}[thm]{Proposition}
\newtheorem{lemma}[thm]{Lemma}
\newtheorem{defn}[thm]{Definition}
\newtheorem{rem}[thm]{Remark}

\newtheorem{exa}[thm]{Example}
\newtheorem{question}[thm]{Question}

 \newcommand{\R}{\mathbb R}

\numberwithin{equation}{section}
\numberwithin{table}{section} 
\numberwithin{figure}{section}

\begin{document}

\title{Isoperimetric control of the Steklov spectrum}
\author{Bruno Colbois}
\address{Universit\'e de Neuch\^atel, Institut de Math\'ematiques, Rue Emile-Argand 11, Case postale 158, 2009 Neuch\^atel
Switzerland}
\email{bruno.colbois@unine.ch}
\author{Ahmad El Soufi}
\address{Laboratoire de Math\'{e}matiques et Physique Th\'{e}orique, UMR-CNRS 6083, Universit\'{e} François Rabelais de Tours, Parc de Grandmont, 37200 Tours, France}
\email{elsoufi@univ-tours.fr}
\author{Alexandre Girouard}
\address{Universit\'e de Neuch\^atel, Institut de Math\'ematiques, Rue Emile-Argand 11, Case postale 158, 2009 Neuch\^atel
Switzerland }
\email{alexandre.girouard@unine.ch}

\date{\today}
\begin{abstract}
  Let $(N,g)$ be a complete Riemannian manifold of dimension $n+1$
  whose Riemannian metric $g$ is conformally equivalent to a metric
  with non-negative Ricci curvature. 
  The normalized Steklov eigenvalues $\overline{\sigma}_k(\Omega)$ of
  a bounded domain $\Omega$ in $N$ are bounded above in terms of the
  isoperimetric ratio of the domain. Consequently, the normalized Steklov
  eigenvalues of a bounded domain $\Omega$ in Euclidean space,
  hyperbolic space or a standard hemisphere are uniformly bounded
  above~:
  $\overline{\sigma}_k(\Omega)\leq C(n) k^{2/(n+1)},$
  where $C(n)$ is a constant depending only on the dimension.
  On a compact surface $\Sigma$ with
  boundary, the normalized Steklov eigenvalues are uniformly bounded
  above in terms of genus~:
  $\overline{\sigma}_k(\Sigma)\leq C\left(1+\mbox{genus}(\Sigma)\right)k.$
  We also obtain a relationship between the Steklov eigenvalues of a
  domain $\Omega$ and the eigenvalues of the Laplace-Beltrami operator
  on the hypersurface bounding $\Omega$. 
\end{abstract}


\subjclass[2000]{
58J50, 58E11, 35P15
}
\keywords{Laplacian, eigenvalue, upper bound, submanifold, isoperimetric constant}


\maketitle

\section{Introduction}
The goal of this paper is to obtain geometric upper bounds for the
spectrum of the Dirichlet-to-Neumann map.
Let $N$ be a complete Riemannian manifold. Let $\Omega$ be a relatively compact
domain in $N$ with smooth boundary $\Sigma$. The Dirichlet-to-Neumann
map $\Lambda: C^{\infty}(\Sigma)\rightarrow C^{\infty}(\Sigma)$ is
defined by 
$$\Lambda f=\partial_n(Hf)$$
where $Hf$ is the harmonic extension of $f$ to the interior of $\Omega$ and
$\partial_n$ is the outward normal derivative.
The Dirichlet-to-Neumann map is a first order elliptic
pseudodifferential operator~\cite{taylorPDEII}. Because $\Sigma$ is
compact, the spectrum of $\Lambda$ is positive, discrete and
unbounded~\cite[p. 95]{band}: 
\begin{gather*}
  0 = \sigma_1\le \sigma_2(\Omega) \leq  \sigma_3(\Omega) \leq \cdots
  \nearrow\infty.
\end{gather*}
The spectrum of this operator is also called the Steklov spectrum
of the domain $\Omega$.

\subsection{Physical interpretation}
Prototypical in inverse problems, the Dirichlet-to-Neumann map is
closely related to the Calder\'on problem~\cite{cldr} of determining
the anisotropic conductivity of a body from current and voltage
measurements at its boundary. This point of view makes it useful as a
model for Electrical Impedance Tomography. A particularly striking
related result~\cite{LaTaUh} is that if the manifold $M$ is real
analytic of dimension at least 3, then the knowledge of $\Lambda$
determines $M$ up to isometry. The study of the spectrum of $\Lambda$
was initiated by Steklov in 1902~\cite{stek}. Eigenvalues and
eigenfunctions of this operator are used in fluid mechanics, heat
transmission and vibration problems~\cite{foxkut,kokrein}.

\subsection{Optimization}
The general question we are interested in is to give upper bounds for
the eigenvalues in terms of natural geometric quantities. Because the
eigenvalues are not invariant under scaling of the Riemannian metric,
we consider normalized eigenvalues 
$$\bar\sigma_k(\Omega):=\sigma_k(\Omega)|\Sigma|^{\frac{1}{n}}
\quad\mbox{ with }
\quad|\Sigma|=\int_{\Sigma}dv_{\Sigma}$$
where $n$ is the dimension of the boundary $\Sigma$ and $dv_{\Sigma}$
is the measure induced by the Riemannian metric of $N$ restricted to
$\Sigma$.


\medskip
\begin{question}
  Given a complete Riemannian manifold $N$, is
  $\overline{\sigma}_k(\Omega)$ uniformly bounded above among bounded
  domains $\Omega\subset N$?
\end{question}

\medskip
For the first non-zero eigenvalue, this question has been studied
by many
authors. See~\cite{payne,hps} for early results in the planar
case. The series of paper by J. Escobar~\cite{esco1,esco2,esco3} is 
influencial. For more recent results,
see~\cite{wangxia2,fraschoen}. For higher eigenvalues in the planar
situation, see~\cite{gp,gp2}.

\medskip
The main result of this paper (Theorem~\ref{maindegree}) is an upper
bound for the eigenvalues of the Dirichlet-to-Neumann map on a domain
in a complete Riemannian manifold satisfying a growth and a packing
condition in terms of its isoperimetric ratio. We list here some
applications.

\subsubsection*{Domains in space forms}
The case of simply connected planar domain is well
understood. See~\cite{wein, hps} and especially~\cite{gp2} for a
survey of this problem. If a bounded domain $\Omega \subset \R^2$ is
simply connected, then 
\begin{gather}\label{ineqPlanarDomain}
  \bar\sigma_k(\Omega)\le 2\pi k.
\end{gather}
This inequality is optimal.
In higher dimensions, only few results
about the first non-zero eigenvalue are known. See~\cite{brock} for a
different normalization.

\medskip
Our first result is a generalization of the above to the case of
arbitrary\footnote{i.e. not necessarily simply connected.} domains in
space forms.
\begin{thm}\label{thmSpaceform}
  There exists a constant $C_n$ depending only on the dimension $n$
  such that, for each bounded domain $\Omega$ in a space form
  $\R^n$, $\mathbb H^n$ or in an hemisphere of $\mathbb S^n$, we
  have 
  \begin{gather}\label{ineqSpaceForm}
    \bar\sigma_k(\Omega)\leq C_n k^{2/n}.
  \end{gather}
\end{thm}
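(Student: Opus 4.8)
The plan is to deduce Theorem~\ref{thmSpaceform} directly from the main result, Theorem~\ref{maindegree}, whose conclusion bounds $\bar\sigma_k(\Omega)$ by the product of $k^{2/n}$ with (a power of) a scale-invariant isoperimetric ratio of $\Omega$, such as $|\Omega|/|\partial\Omega|^{n/(n-1)}$. Two ingredients must be assembled: first, that each of the three model geometries falls under the hypotheses of Theorem~\ref{maindegree}; and second, that the isoperimetric ratio of an arbitrary bounded domain in these geometries is controlled by a purely dimensional constant. Combining the two collapses the domain-dependent factor into the constant $C_n$ and yields $\bar\sigma_k(\Omega)\le C_n k^{2/n}$.

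For the first ingredient, Euclidean space is flat, so $\mathrm{Ric}\equiv 0$ and the hypothesis (conformal equivalence to a complete metric of non-negative Ricci curvature) holds trivially with $g_0=g$. For the hemisphere and for hyperbolic space I would exhibit explicit conformal representatives: stereographic projection identifies the open hemisphere of $\mathbb S^n$ conformally with a Euclidean ball, with conformal factor $4/(1+|x|^2)^2$, while the ball model identifies $\mathbb H^n$ conformally with the Euclidean unit ball, with factor $4/(1-|x|^2)^2$. Both models are therefore conformally flat, and I would then check that the growth and packing conditions underlying Theorem~\ref{maindegree} are satisfied in each case.

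For the second ingredient, I would invoke the sharp isoperimetric inequality appropriate to each space form. In $\R^n$ and on $\mathbb S^n$ the ratio $|\Omega|/|\partial\Omega|^{n/(n-1)}$ is maximized by geodesic balls and hence bounded by a dimensional constant; in $\mathbb H^n$ it is also bounded above, since for large domains one has $|\Omega|\simeq|\partial\Omega|$ and the ratio degenerates toward $0$ rather than blowing up. A short check should confirm the normalization exponents: here the ambient dimension is $n$, so the boundary $\Sigma$ has dimension $n-1$, the scaling factor is $|\Sigma|^{1/(n-1)}$, and the resulting power of $k$ is exactly $k^{2/n}$, consistent with the planar case $n=2$ in which inequality~\eqref{ineqPlanarDomain} is recovered.

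The main obstacle I anticipate lies in the first ingredient for the two curved models: the natural conformal representative produced by stereographic projection or the ball model is a bounded, incomplete Euclidean domain, so one must reconcile this with the completeness and growth requirements of Theorem~\ref{maindegree}. The argument must therefore either produce a genuinely complete representative of non-negative Ricci curvature within the conformal class, or verify the growth and packing conditions directly in the model metric using the explicit conformal factors above. Everything else—the isoperimetric control and the bookkeeping of exponents—should be routine once the hypotheses of Theorem~\ref{maindegree} are confirmed.
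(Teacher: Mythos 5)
Your proposal is correct and follows essentially the same route as the paper: deduce the theorem from Theorem~\ref{maindegree} (the three models being conformally equivalent to complete metrics with non-negative Ricci curvature, so that the packing and growth constants are dimensional) and then absorb the isoperimetric ratio $I(\Omega)$ into the constant via a dimensional lower bound. The only differences are minor: the paper gets the lower bound on $I(\Omega)$ from Croke's inequality (Proposition~\ref{croke}) rather than from the sharp isoperimetric inequalities you invoke --- an alternative the paper's own remark after Theorem~\ref{spaceforms} explicitly sanctions --- and it sidesteps your ``incomplete conformal representative'' concern for the hemisphere by taking $g_0=g$ to be the round metric on all of $\mathbb{S}^{n+1}$ (complete, $\mathrm{Ric}\ge 0$), while for hyperbolic space it uses precisely the conformal-to-Euclidean identification you describe, the implicit point being that the ambient complete manifold may be taken to be all of Euclidean space since the conclusion only depends on the metric on $\Omega$.
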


This result follows from a more general result allowing control of
the Steklov spectrum of a domain in a complete manifold in terms of
its isoperimetric ratio.

\subsubsection*{Domains in a complete manifold}
The following theorem shows that under an additional assumption on
Ricci curvature, we can control the normalized Steklov eigenvalue
$\overline{\sigma}_k$ of a domain in terms of its isoperimetric ratio.

\medskip
\begin{thm}\label{ricpos}
  Let $N$ be a complete  manifold of dimension $n+1$. If $N$ is
  conformally equivalent to a complete manifold 
  with non-negative Ricci curvature,  then for each domain
  $\Omega\subset N$, we have 
  \begin{gather}\label{ineqricpos}
    \bar\sigma_k(\Omega) \le \frac{\gamma(n)}{I(\Omega)^{\frac{n-1}{n}}}k^{2/(n+1)},
  \end{gather}
  where $I(\Omega)$ is the classical isoperimetric ratio of 
  $\Omega$, namely
  \begin{gather*} 
    I(\Omega) = \frac{\vert \Sigma \vert}{\vert \Omega \vert^{n/(n+1)}}.
  \end{gather*}
\end{thm}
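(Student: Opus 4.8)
The plan is to bound $\sigma_{k+1}(\Omega)$ through the variational principle: if one produces $k+1$ functions in $H^1(\Omega)$ with pairwise disjoint supports, each of Rayleigh quotient $\int_\Omega|\nabla f|_g^2\,dV_g\big/\int_\Sigma f^2\,dv_\Sigma$ at most $\Lambda$, then $\sigma_{k+1}(\Omega)\le\Lambda$; here one uses that replacing $f$ by the harmonic extension of its trace only lowers the Dirichlet energy, so arbitrary $H^1$ competitors are admissible. Let $g_0$ be the metric with $\mathrm{Ric}_{g_0}\ge 0$ in the conformal class of $g$, say $g=e^{2\phi}g_0$. I would run the entire construction with respect to the geometry of $g_0$, while measuring boundary mass with the $g$-boundary measure.

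Because $\mathrm{Ric}_{g_0}\ge 0$, the Bishop--Gromov inequality gives the volume-doubling property of $(N,g_0)$, hence a $(2,N(n))$-covering (packing) property of its balls with $N(n)$ depending only on $n$. Applying the Grigor'yan--Netrusov--Yau / Colbois--Maerten metric-measure decomposition to $(\overline\Omega,d_{g_0})$ endowed with $\nu=dv_\Sigma$ (the $g$-boundary measure viewed on $\overline\Omega$), I obtain, for any integer $m$, a family of $m$ geodesic annuli $A_i=A(a_i,r_i,R_i)$ of $g_0$ that are $2$-disjoint (the doubled annuli $2A_i$ are pairwise disjoint) with $\nu(A_i)\ge c(n)|\Sigma|/m$. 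To each $A_i$ I attach the plateau function $f_i$ equal to $1$ on $A_i$, to $0$ off $2A_i$, interpolated linearly in $d_{g_0}(a_i,\cdot)$ across the two transition shells $T_i$, so that $|\nabla f_i|_{g_0}\le 2/r_i$ on the inner shell and $\le 1/R_i$ on the outer one; these $f_i$ have pairwise disjoint supports.

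The denominators are immediate, $\int_\Sigma f_i^2\,dv_\Sigma\ge\nu(A_i)\ge c(n)|\Sigma|/m$, and the conformal factor is the crux of the numerators. From $dV_g=e^{(n+1)\phi}dV_{g_0}$ and $|\nabla f_i|_g^2=e^{-2\phi}|\nabla f_i|_{g_0}^2$ one gets $\int_\Omega|\nabla f_i|_g^2\,dV_g=\int_{T_i}e^{(n-1)\phi}|\nabla f_i|_{g_0}^2\,dV_{g_0}$, an awkward intermediate weight that I would defeat by Hölder's inequality with exponents $\tfrac{n+1}{n-1}$ and $\tfrac{n+1}{2}$:
\[
\int_{T_i}e^{(n-1)\phi}\,dV_{g_0}\le |T_i|_g^{\frac{n-1}{n+1}}\,|T_i|_{g_0}^{\frac{2}{n+1}},
\]
which decouples the estimate into a $g$-volume factor and a $g_0$-radius factor. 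Since $|T_i|_{g_0}\le |B_{g_0}(a_i,2R_i)|_{g_0}\le\omega_{n+1}(2R_i)^{n+1}$ by Bishop--Gromov and $|\nabla f_i|_{g_0}$ is of order $1/R_i$ (resp. $1/r_i$), the scale factors cancel and the radii are eliminated, leaving $\int_\Omega|\nabla f_i|_g^2\,dV_g\le C(n)\,|T_i|_g^{(n-1)/(n+1)}$.

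Finally I would run the decomposition with $2m$ annuli. As the $2A_i$ are disjoint, $\sum_i|T_i|_g\le|\Omega|$, so at most $m$ indices satisfy $|T_i|_g>|\Omega|/m$; keeping the other $m$ annuli, for which $|T_i|_g\le|\Omega|/m$, yields
\[
\frac{\int_\Omega|\nabla f_i|_g^2\,dV_g}{\int_\Sigma f_i^2\,dv_\Sigma}\le\frac{C(n)(|\Omega|/m)^{(n-1)/(n+1)}}{c(n)|\Sigma|/m}=C'(n)\,\frac{|\Omega|^{(n-1)/(n+1)}}{|\Sigma|}\,m^{2/(n+1)},
\]
using $1-\tfrac{n-1}{n+1}=\tfrac{2}{n+1}$. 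The variational principle then gives $\sigma_{m+1}(\Omega)$ below this quantity; multiplying by $|\Sigma|^{1/n}$ and recognizing $|\Omega|^{(n-1)/(n+1)}|\Sigma|^{1/n-1}=I(\Omega)^{-(n-1)/n}$ rearranges, after relabelling $m+1\to k$ and absorbing constants, into the asserted $\bar\sigma_k(\Omega)\le\gamma(n)\,I(\Omega)^{-(n-1)/n}k^{2/(n+1)}$. I expect the main obstacle to be the two-part heart of the construction: securing the metric-measure decomposition that distributes the $g$-boundary mass into many $g_0$-disjoint annuli with packing constant depending only on $n$ (supplied by Bishop--Gromov), together with the Hölder decoupling that converts the non-invariant $g$-Dirichlet energy of $g_0$-radial test functions into a product of a controllable $g$-volume and a $g_0$-radius factor.
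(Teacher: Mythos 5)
Your proposal is correct and follows essentially the same route as the paper's proof of Theorem~\ref{maindegree} (of which Theorem~\ref{ricpos} is the case $\delta\equiv 1$): the Grigor'yan--Netrusov--Yau annuli decomposition of the $g_0$-metric space weighted by the boundary measure, Bishop--Gromov supplying the packing and growth constants, plateau test functions on $2$-disjoint annuli, the pigeonhole selection of the half of the annuli meeting $\Omega$ in $g$-volume at most $\vert\Omega\vert/k$, and the H\"older step with exponents $\tfrac{n+1}{n-1},\tfrac{n+1}{2}$ --- which the paper phrases as conformal invariance of the generalized Dirichlet energy $\int\vert\nabla h\vert^{n+1}dv$ but is the identical computation. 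The only caveats are cosmetic: your displayed H\"older bound must be applied shell-by-shell (inner gradient $2/r_i$ paired with $\vert B(a_i,r_i)\vert_{g_0}\le\omega r_i^{n+1}$, outer gradient $1/R_i$ paired with $\omega(2R_i)^{n+1}$, as your ``resp.'' suggests), and $m$ disjointly supported functions give $\sigma_m\le\Lambda$ rather than $\sigma_{m+1}\le\Lambda$, though relabelling $m\to k$ still yields the stated inequality.
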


\medskip
A surprising corollary of this theorem is that if
$\dim N\geq 3$ then a large isoperimetric ratio $I(\Omega)$  
implies that the normalized eigenvalue $\bar\sigma_k(\Omega)$ is
small. This is false for surfaces ($n=1$), see
Example~\ref{exampleSurface}.

\begin{rem}
  Since there exists a constant $c_n$ such that
  $$\overline{\sigma}_k(\Omega)\sim c_n k^{1/n}\mbox{ as }k\rightarrow\infty,$$
  one may expect that a bound such as~(\ref{ineqricpos}) should hold
  with exponents $1/n$. In fact, for $n\geq 2$, this is impossible
  because it would imply an upper bound on $I(\Omega)$. Naturally, if
  we remove $I(\Omega)$, such a bound might still be possible. For
  instance, we do not know if inequality~(\ref{ineqSpaceForm}) holds
  with exponent improved to $1/n$.
\end{rem}

\subsubsection*{Large eigenvalues}
The assumption of non-negative Ricci curvature is essential.
In section~\ref{large} we will construct for each $n\geq 2$ and each
$\kappa<0$ a complete manifold $N$ of dimension $n+1$ with Ricci
curvature bounded below by $\kappa$ admitting a sequence $\Omega_j$ of
domains such that the normalized eigenvalues
$\bar\sigma_2(\Omega_j)\to\infty$ and the isoperimetric ratio
$I(\Omega_j)\rightarrow\infty$.

\medskip
Under the assumption of non-negative Ricci curvature, we do not
know if the presence of the isoperimetric ratio is
essential. Namely, is there a constant $C(n,k)$ such that for each
domain $\Omega\subset N$, $\bar\sigma_k(\Omega) \le C(n,k)$ ?
Of course, this will be the case if we can give uniform lower bound on
the isoperimetric ratio $I(\Omega)$. This situation will be discussed
in Proposition~\ref{croke} and in Corollary~\ref{domric}.

\subsection{Surfaces}

If $N$ is two-dimensional the isoperimetric ratio disappears from
inequality~(\ref{ineqricpos}). This means that for any domain in a
complete surface with conformally non-negative curvature we get a
uniform bound similar to~(\ref{ineqPlanarDomain})~:
\begin{gather*}
  \bar\sigma_k(\Omega) \le \gamma(2)k.
\end{gather*}

In fact, in the case of surfaces, we don't need to assume our compact
manifold to be a domain in a complete manifold with non-negative
  Ricci curvature. Let $M$ be a compact surface with smooth
  boundary $\Sigma$. The Steklov spectrum of $M$ 
  is defined exactly as in the case of a domain.
\begin{thm}\label{surfaces}
  There exists a constant $C$ such that for any compact orientable
  Riemannian surface $M$ of genus $\gamma$ with non-empty smooth
  boundary,
  \begin{align}
    \bar\sigma_k(M)\leq C\left\lfloor\frac{\gamma+3}{2}\right\rfloor k\label{ineqsurfaces}
  \end{align}
  where $\lfloor.\rfloor$ is the integer part.
\end{thm}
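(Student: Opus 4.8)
The strategy is to use the conformal invariance of the Dirichlet energy in dimension two to transplant the problem onto a fixed model surface, the round sphere $\mathbb{S}^2$, where test functions can be produced by hand. I would start from the variational description
$$\sigma_k(M)=\min_{\dim E=k}\ \max_{0\neq f\in E}\frac{\int_M|\nabla f|^2\,dv_M}{\int_\Sigma f^2\,dv_\Sigma},$$
the minimum being over $k$-dimensional subspaces $E\subset H^1(M)$. The whole proof then reduces to exhibiting $k$ functions $f_1,\dots,f_k$ with pairwise disjoint supports in $M$, each of whose Rayleigh quotient is at most $C\lfloor(\gamma+3)/2\rfloor\,k/|\Sigma|$: because the supports are disjoint, both the numerator and the denominator of the Rayleigh quotient of any linear combination split as sums with no cross terms, so the quotient on the span of the $f_i$ never exceeds the largest individual quotient, and min-max gives the desired bound on $\sigma_k(M)$.

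To build these functions I would first replace $M$ by a closed surface. Capping off each boundary circle with a disc and extending the conformal structure produces a closed Riemann surface $\tilde M$ of the same genus $\gamma$ (an Euler characteristic count shows the genus is preserved), into which $M$ sits conformally with $\Sigma$ appearing as a family of smooth curves. By the classical gonality estimate from Riemann--Roch and Brill--Noether theory, $\tilde M$ admits a holomorphic branched covering $\phi:\tilde M\to\mathbb{S}^2$ of degree $d\le\lfloor(\gamma+3)/2\rfloor$; crucially this degree is governed by the genus alone and is insensitive to the number of boundary components, which is the reason the final estimate depends only on $\gamma$. I would then push the boundary measure forward to the sphere, setting $\mu=(\phi|_\Sigma)_*\,dv_\Sigma$, a finite measure on $\mathbb{S}^2$ of total mass $|\Sigma|$.

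The analysis now takes place on the fixed space $(\mathbb{S}^2,\mu)$. Invoking the Grigor'yan--Netrusov--Yau decomposition --- valid for an arbitrary finite measure on a metric space enjoying the covering property of the round sphere --- I would, for each $k$, extract $k$ disjoint capacitors together with cutoff functions $u_1,\dots,u_k$ having disjoint supports, each equal to one on a set of $\mu$-mass at least $c|\Sigma|/k$ and with Dirichlet energy $\int_{\mathbb{S}^2}|\nabla u_i|^2\,dv_{\mathbb{S}^2}\le C_0$ bounded by a universal constant (for tent functions on a two-dimensional annulus the energy is scale-free). Pulling back, the $f_i=u_i\circ\phi$ have pairwise disjoint supports on $M$, and
$$\int_\Sigma f_i^2\,dv_\Sigma=\int_{\mathbb{S}^2}u_i^2\,d\mu\ge\frac{c|\Sigma|}{k},$$
while conformal invariance of the Dirichlet integral in dimension two together with $\deg\phi=d$ gives
$$\int_M|\nabla f_i|^2\,dv_M\le\int_{\tilde M}|\nabla f_i|^2\,dv_{\tilde M}=d\int_{\mathbb{S}^2}|\nabla u_i|^2\,dv_{\mathbb{S}^2}\le d\,C_0.$$
Each Rayleigh quotient is therefore at most $C_0\,d\,k/(c|\Sigma|)$, and the variational principle yields $\sigma_k(M)\,|\Sigma|\le C\lfloor(\gamma+3)/2\rfloor k$, which is the claim.

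I expect the main obstacle to be the Grigor'yan--Netrusov--Yau step: producing $k$ disjoint capacitors on $(\mathbb{S}^2,\mu)$ that simultaneously carry enough mass and have bounded capacity. The measure $\mu$ is essentially arbitrary and may concentrate or even carry atoms when $\phi|_\Sigma$ fails to be injective, so it cannot be split into equal pieces naively; the decomposition handles this by allowing capacitors that either are genuine annuli or degenerate to points, and in both cases one must verify the uniform energy bound using the two-dimensional scale invariance above. The remaining points --- that capping preserves the genus, that the gonality bound ignores the boundary components, and the bookkeeping between the number of test functions and the index $k$ --- are comparatively routine once the decomposition is available.
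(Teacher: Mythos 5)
Your proposal is correct and follows essentially the same route as the paper: cap off the boundary circles, use a branched cover $\phi:\tilde M\to\mathbb{S}^2$ of degree $\lfloor(\gamma+3)/2\rfloor$, push the boundary measure forward, apply the Grigor'yan--Netrusov--Yau annuli construction on $(\mathbb{S}^2,\mu)$, and pull back cutoff functions using two-dimensional conformal invariance of the Dirichlet energy together with the degree bound. The only cosmetic difference is that you skip the paper's step of selecting, among $2k$ annuli, the $k$ whose preimages have small volume --- which is harmless here since that volume factor enters with exponent $1-2/(n+1)=0$ in dimension two.
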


\medskip
This result is in the spirit of Korevaar~\cite{kvr} and generalizes
a recent result of Fraser-Schoen~\cite{fraschoen}.

\subsection{Relationships with the spectrum of the Laplacian for
  Euclidean hypersurface}
In section~\ref{sectionrelations}, we will use the result of our
paper~\cite{ceg1} to establish a relation between the spectrum of the
Dirichlet-to-Neumann map and the spectrum of the Laplacian acting on
smooth function of the boundary $\Sigma$. The main consequence of this
estimate is that for a manifold embedded as hypersurface in Euclidean
space, the presence of large normalized eigenvalue of the
Laplacian will force
the normalized eigenvalues $\overline{\sigma}_k$ to be small.


\section{Statement and proof of the main theorem }\label{sectionmainthm}
We consider a slightly more general eigenvalue problem than that
of the introduction. Let $M$ be a sufficiently regular compact
Riemannian manifold of dimension $n+1$ with boundary $\Sigma$.  Let
$\delta$ is a smooth non-negative and non identically zero function on
$\Sigma$.
The Steklov eigenvalue problem is
\begin{gather*}
  \Delta f=0 \ \mbox{ in } M,\\
  \partial_nf=\sigma\, \delta f \mbox{ on } \Sigma.
\end{gather*}
It has positive and discrete spectrum~\cite[p. 95]{band}:
\begin{gather*}
  0 = \sigma_1\le \sigma_2(M,\delta) \leq \sigma_3(M,\delta) \leq \cdots \nearrow\infty.
\end{gather*}
Because the eigenvalues are not
invariant under scaling of the Riemannian metric or of the mass
density $\delta$, we consider the normalized eigenvalues 
$$\bar\sigma_k(M,\delta):=\sigma_k(M,\delta) {m(\Sigma,\delta)} |\Sigma|^{\frac 1 n},$$ with 
$$ |\Sigma|=\int_{\Sigma}dv_{\Sigma} \quad\mbox{ and } \quad m(\Sigma,\delta)
=\frac 1 {|\Sigma|}\int_{\Sigma}\delta\,dv_{\Sigma}.$$

\medskip
Let $(N,g_0)$ be a complete Riemannian manifold of dimension
$(n+1)$. We consider the Riemannian distance $d_0$ induced by $g_0$
and we assume:

\medskip
\noindent
(\textbf{P1}) There exists a constant $C$ depending on $d_0$ such that
each ball of radius $2r$ in $N_0$ may be covered by at most $C$ balls of radius $r$.

\medskip
\noindent
(\textbf{P2})
There exists a constant $\omega$ depending only on $g_0$ such that, for each $x\in N_0$,
and $r\ge 0$, $\vert B(x,r)\vert \le \omega r^{n+1}$.

\medskip
\begin{exa} \label{control}
There is a large supply of complete Riemannian manifolds satisfying
these conditions.
\begin{enumerate}
\item
  If $N$ is compact, then (P1) and (P2) are satisfied. In this
  case the constants $C$ and $\omega$ depend on $g_0$.
\item
  If the Ricci curvature of $g_0$ is non-negative then, by
  Bishop-Gromov comparison theorem, there exist constants $C$ and
  $\omega$ depending only on the dimension of $N$
  such that (P1) and (P2) are satisfied. This is in particular
  the case of the Euclidean space $\mathbb R^{n+1}$, and we will use
  this in the proof of Theorem~\ref{spaceforms} and
  Theorem~\ref{ricpos}.
\end{enumerate}
\end{exa}

\medskip
It follows from the previous example that Theorem~\ref{ricpos} is a
corollary of the following theorem.
\begin{thm}\label{maindegree}
  Let $(N,g_0)$ be a complete Riemannian manifold of dimension $(n+1)$ 
  satisfying $(P1)$ and $(P2)$. Let $g\in[g_0]$ be a metric in the
  conformal class of $g_0$.
  Then, there exists a constant $\gamma(g_0)$ depending only on the
  constants $C$ and $\omega$ coming from $(P1)$ and $(P2)$ such that,
  for any bounded domain $\Omega\subset N$ and any density $\delta$ on
  $\Sigma=\partial\Omega$, we have 
  \begin{gather}\label{ineqmainthm}
    \bar\sigma_k(\Omega,\delta)
    \leq\frac{\gamma(g_0)}{I(\Omega)^{\frac{n-1}{n}}}k^{2/(n+1)}.
  \end{gather}
\end{thm}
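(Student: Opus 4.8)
The plan is to bound $\sigma_k(\Omega,\delta)$ through the variational principle by producing $k$ disjointly supported test functions, so that $\sigma_k\le\max_i\frac{\int_\Omega|\nabla f_i|^2\,dV_g}{\int_\Sigma f_i^2\,\delta\,dv_\Sigma}$. First I would recast the target into a scale-invariant form. Expanding the normalization $\bar\sigma_k=\sigma_k\,m(\Sigma,\delta)\,|\Sigma|^{1/n}$ and using $I(\Omega)=|\Sigma|/|\Omega|^{n/(n+1)}$, inequality~(\ref{ineqmainthm}) is equivalent to
\[
\sigma_k(\Omega,\delta)\int_\Sigma \delta\,dv_\Sigma \;\le\; \gamma(g_0)\,|\Omega|^{\frac{n-1}{n+1}}\,k^{2/(n+1)},
\]
since $m(\Sigma,\delta)|\Sigma|=\int_\Sigma\delta\,dv_\Sigma$ and the surplus powers of $|\Sigma|$ cancel. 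So it suffices to build, for each $i$, a test function whose boundary mass $\int_\Sigma f_i^2\delta\,dv_\Sigma$ is at least a fixed multiple of $\frac1k\int_\Sigma\delta\,dv_\Sigma$ and whose energy $\int_\Omega|\nabla f_i|^2\,dV_g$ is at most a multiple of $(|\Omega|/k)^{(n-1)/(n+1)}$.

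For the construction I would invoke a Grigor'yan--Netrusov--Yau type decomposition of the metric measure space $(N,d_0,\mu)$, where $\mu=\delta\,dv_\Sigma$ is the boundary mass viewed as a measure on $N$. Condition $(P1)$ is exactly the covering hypothesis this decomposition needs, and it produces $2k$ metric annuli $A_i=B_{d_0}(x_i,R_i)\setminus B_{d_0}(x_i,r_i)$ with $\mu(A_i)\ge\frac{c}{k}\mu(\Sigma)$ (the constant $c$ depending on the covering constant $C$) and with the dilated annuli $2A_i=B_{d_0}(x_i,2R_i)\setminus B_{d_0}(x_i,r_i/2)$ pairwise disjoint. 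On each $A_i$ I take $f_i$ equal to $1$ on $A_i$, supported in $2A_i$, interpolating linearly in the distance $d_0$ across the two collars. Disjointness of the $2A_i$ makes the $f_i$ disjointly supported, and $f_i\equiv 1$ on $A_i$ gives the required lower bound on $\int_\Sigma f_i^2\delta\,dv_\Sigma$.

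The heart of the argument---and the step I expect to be the main obstacle---is the energy estimate, where the conformal factor and the a priori uncontrolled radii $r_i,R_i$ must simultaneously disappear. Writing $g=e^{2\phi}g_0$, on the outer collar the profile is linear in $d_0$, so $|\nabla f_i|_g=\frac{1}{R_i}|\nabla d_0|_g=\frac{1}{R_i}e^{-\phi}$, whence $|\nabla f_i|^2_g\,dV_g=\frac{1}{R_i^2}e^{(n-1)\phi}\,dV_{g_0}$. I would then apply H\"older's inequality with conjugate exponents $\frac{n+1}{n-1}$ and $\frac{n+1}{2}$,
\[
\int e^{(n-1)\phi}\,dV_{g_0}\le\Big(\int e^{(n+1)\phi}\,dV_{g_0}\Big)^{\frac{n-1}{n+1}}\Big(\int 1\,dV_{g_0}\Big)^{\frac{2}{n+1}},
\]
the integrals taken over the outer collar intersected with $\Omega$. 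The first factor is a $g$-volume, bounded by $v_i:=|2A_i\cap\Omega|_g$; the second is a $g_0$-volume, bounded by $|B_{d_0}(x_i,2R_i)|_{g_0}\le\omega(2R_i)^{n+1}$ via condition $(P2)$. The powers of $R_i$ then cancel exactly, leaving $\int_{\text{outer}}|\nabla f_i|^2_g\,dV_g\le C(\omega,n)\,v_i^{(n-1)/(n+1)}$, and the inner collar is handled identically. This is precisely the mechanism by which volume growth pays for the conformal factor and the exponent $(n-1)/(n+1)$ is generated.

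Finally I would reconcile the two measures at play---boundary mass in the denominator, bulk $g$-volume in the numerator. Since the $2A_i$ are disjoint, $\sum_i v_i\le|\Omega|_g$, so at most $k$ of the $2k$ annuli can have $v_i\ge|\Omega|_g/k$; discarding these leaves at least $k$ annuli with $v_i<|\Omega|_g/k$ while still $\mu(A_i)\ge\frac{c}{k}\mu(\Sigma)$. For each surviving index,
\[
\frac{\int_\Omega|\nabla f_i|^2\,dV_g}{\int_\Sigma f_i^2\delta\,dv_\Sigma}\le\frac{C\,(|\Omega|_g/k)^{(n-1)/(n+1)}}{\tfrac{c}{k}\int_\Sigma\delta\,dv_\Sigma}=\gamma(g_0)\,\frac{|\Omega|_g^{(n-1)/(n+1)}}{\int_\Sigma\delta\,dv_\Sigma}\,k^{2/(n+1)},
\]
using $1-\frac{n-1}{n+1}=\frac{2}{n+1}$. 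Taking the maximum over the $k$ surviving test functions in the variational principle yields the reduced inequality, hence~(\ref{ineqmainthm}). As a sanity check the surface case $n=1$ is consistent: the exponent $(n-1)/(n+1)$ vanishes, the Dirichlet energy becomes conformally invariant, and the isoperimetric ratio drops out, matching the phenomenon noted in the introduction.
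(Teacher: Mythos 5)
Your proposal is correct and follows essentially the same route as the paper: the Grigor'yan--Netrusov--Yau annuli for $(N,d_0,\mu)$ with $\mu=\delta\,dv_\Sigma$, plateau cut-off functions on the annuli, the counting/reordering step to keep the $k$ annuli of small $g$-volume, and the H\"older estimate in which $(P2)$ cancels the radii. The only difference is presentational: you expand the conformal factor $e^{2\phi}$ explicitly and apply H\"older to $e^{(n-1)\phi}$, whereas the paper applies H\"older to $|\nabla_g h|^2$ and invokes the conformal invariance of $\int|\nabla h|^{n+1}\,dv$ in dimension $n+1$ --- the same computation written in two ways.
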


\medskip
The proof of Theorem~\ref{maindegree} is based on the
construction of a family of disjointly supported functions with
controlled Rayleigh quotient 

\begin{gather*}
  R(f)=
  \frac{\int_{\Omega}\vert \nabla_{g} f \vert^2 dv_g}{\int_{\Sigma}f^2\delta dv_{\Sigma}}.
\end{gather*}

\medskip
On $N$ we consider the Borel measure $\mu=\delta dv_{\Sigma}$.
That is, the measure of an open set $\mathcal{O}\subset N$ is
\begin{align}\label{measure}
  \mu(\mathcal{O})=\int_{\mathcal{O}\cap\Sigma}\delta\,dv_{\Sigma}.
\end{align}

\medskip
In particular, we have
$$
\mu(N)= \int_{\Sigma}\delta dv_{\Sigma}= \vert \Sigma \vert m(\Sigma,\delta).
$$

\begin{defn} Let $(X,d)$ be a metric space.
  An \emph{annulus} $A\subset X$ is a subset of the form $\{x \in X:
  r<d(x,a)<R\}$ where $a\in X$ and $0 \le r < R<\infty$. The annulus
  $2A$ is the annulus $\{x \in X: r/2 < d(x,a)< 2R\}$. In particular,
  $A \subset 2A$.
\end{defn}

\medskip
Theorem 1.1 and Corollary 3.12 of~\cite{gyn} tell us that if a metric
measured space $(X,d,\nu)$ satisfy property $(P1)$ and if the measure
$\nu$ is non-atomic, then there is a constant $c>0$ such that, for
each positive integer $k$, there exist a family of $2k$ annuli
$\{A_i\}_{i=1}^{2k}$ in $X$ such that
$$
\mu(A_i) \ge c\frac{\nu(X)}{k}.
$$
and the annuli $2A_i$ are disjoint.

\medskip
The constant $c$ depends only on the constant $C$ of property $(P1)$,
that is only on the distance $d$ and not on the measure $\nu$.

\begin{proof}[Proof of Theorem~\ref{maindegree}] 
  Consider the metric measured space $(N,d_0,\mu)$, where $d_0$ is
  the Riemannian distance associated to $g_0$ and $\mu$ is the measure
  induced by $g_0$ and the density $\delta$ as defined above in
  (\ref{measure}).
  
  \smallskip
  It follows from Theorem~1 and Corollary~3.12 of~\cite{gyn}
  mentioned above that there exist $2k$ annuli
  $A_1,...,A_{2k} \subset N$ with
  
  \begin{gather}\label{IneqMuAibounded}
    \mu(A_i) \ge \frac{\mu(N)}{ck},\quad
    c=c(g_0)>0.
  \end{gather}
  The annuli $B_i=2A_i$ are mutually disjoint. We can reorder them so
  that the first $k$ of them satisfy
  
  \begin{gather}\label{firstk}
    \vert B_i \cap \Omega \vert_g
    \leq\frac{\vert \Omega \vert_g }{k}\ \ \ (i=1,\cdots,k).
  \end{gather}

  \medskip
  Let $A=\{x \in N: r<d(x,a)<R\}$ be one of these first $k$ annuli and
  let $h$ a function supported in $2A$.
  Taking~(\ref{firstk}) into account, it follows from H\"older's
  inequality and the conformal invariance of the generalized Dirichlet
  energy that
  \begin{align*}
    \int_{B \cap \Omega} \vert \nabla_g h\vert^2\,dv_g&\leq
    \left(\int_{B \cap \Omega} \vert \nabla_g h\vert^{n+1}\,dv_g\right)^{2/(n+1)}
    \vert B \cap \Omega\vert_g^{1-2/(n+1)}\\
    &\leq
     \left(\int_{2A} \vert \nabla_{g_0} h \vert^{n+1}\,dv_{g_0}\right)^{2/(n+1)}
     \left(\frac{\vert \Omega \vert_g}{k}\right)^{1-2/(n+1)}
  \end{align*}
  
  \medskip
  Choosing the function $h$ that is identically $1$ on $A$ and
  proportional to the distance to $A$ on $2A \setminus A$, we have  
  \begin{gather*}
    \vert \nabla_{g_0} h \vert^{n+1}\leq
    \begin{cases}
      \frac{2^{n+1}}{r^{n+1}}& \mbox{ on } B(a,r)\setminus B(a,r/2),\\
      \frac{1}{R^{n+1}}& \mbox{ on } B(a,2R)\setminus B(a,R).
    \end{cases}
  \end{gather*}
  
  \medskip
  It follows from $(P_2)$ that
  \begin{gather*}
    \int_{2A} \vert \nabla_{g_0} h \vert^{n+1}\,dv_{g_0}
    \leq 2^{n+2}\omega.
  \end{gather*}
  
  This leads to
  \begin{gather*}
    \int_{B \cap \Omega} \vert \nabla_g h\vert^2\,dv_g
    \leq (2^{n+2}\omega)^{2/(n+1)}\left(\frac{\vert \Omega \vert_g }{k}\right)^{(n-1)/(n+1)}
  \end{gather*}

Moreover, using~\eqref{IneqMuAibounded} we get

\begin{gather*}
  \int_{\Sigma} h^2\,\delta dv_{\Sigma} \ge \mu(A) \ge  \frac{\mu(N)}{ck}
\end{gather*}

\medskip
By considering the Rayleigh quotient, this leads to

\begin{gather*}
  \sigma_k(\Omega,\delta)\leq
  \frac{2^{n+2}\omega ck}{\mu(N)}
  \left(\frac{\vert \Omega \vert_g}{k}\right)^{(n-1)/(n+1)}.
\end{gather*}

\smallskip
Using $\mu(N)= \vert \Sigma \vert m(\Sigma,\delta)$,
we conclude

\begin{gather*}
  \bar\sigma_k(\Omega,\delta)= 
  \sigma_k(\Omega,\delta)m(\Sigma,\delta)\vert \Sigma \vert^{1/n}
  \leq
  \frac{\gamma(g_0)}{I(\Omega)^{\frac{n-1}{n}}}k^{2/(n+1)},
\end{gather*}

with $\gamma(g_0)=2^{n+2}c \omega$.
\end{proof}

\section{Applications of Theorem \ref{maindegree}}  \label{applications}

In this section, we prove most of the results announced in the
introduction as consequence of our Theorem~\ref{maindegree}.

\subsection{Domains in a manifold with conformally non-negative Ricci curvature}
It is difficult to estimate the packing constant $C$ and the growth
constant $\omega$ of a general Riemannian manifold. Nevertheless, as
was observed in Example~\ref{control}, in the special situation where
$\Omega$ is a domain $\Omega$ in a complete Riemannian manifold $N$
with non-negative Ricci curvature, it follows from the Bishop-Gromov
inequality that these constants can be estimated in terms of the
dimension.

\begin{thm}\label{ricposDensity}
  Let $(N,g)$ be a complete Riemannian manifold of dimension $(n+1)$ and assume that
  the metric $g$ is conformally equivalent to a metric $g_0$ with $Ric(g_0) \ge 0$. Then,
  for any bounded domain $\Omega \subset N$, and for any density $\delta$ on $\partial \Omega$,
  we have
  
  \begin{align}
    \bar\sigma_k(\Omega,\delta)
    &\leq\frac{\gamma(n)}{I(\Omega)^{\frac{n-1}{n}}}k^{2/(n+1)},\label{thmSteklovGeneral}
  \end{align}
  where $\gamma(n)$ is a constant depending only on $n$.
\end{thm}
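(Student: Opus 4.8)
The plan is to deduce this theorem directly from Theorem~\ref{maindegree} by verifying that the hypotheses $(P1)$ and $(P2)$ hold with constants depending only on the dimension $n+1$. The key observation, already recorded in Example~\ref{control}(2), is that a complete Riemannian manifold with non-negative Ricci curvature satisfies both packing and volume-growth conditions with dimensional constants. So the whole matter reduces to invoking the Bishop--Gromov comparison theorem in the appropriate form.

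First I would set $N_0 = (N,g_0)$, the manifold in the conformal class of $g$ carrying the metric with $\mathrm{Ric}(g_0)\ge 0$, and verify $(P2)$. The Bishop--Gromov comparison theorem states that under $\mathrm{Ric}(g_0)\ge 0$ the volume-ratio function $r\mapsto |B(x,r)|/r^{n+1}$ is non-increasing, and its limit as $r\to 0^+$ equals the Euclidean value $\omega_{n+1}$, the volume of the unit ball in $\R^{n+1}$. Hence $|B(x,r)|\le \omega_{n+1}\,r^{n+1}$ for all $x$ and all $r\ge 0$, which is exactly $(P2)$ with $\omega = \omega_{n+1}$ depending only on $n$.

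Next I would verify the packing condition $(P1)$, namely that each ball of radius $2r$ can be covered by at most $C$ balls of radius $r$, with $C=C(n)$. This follows from $(P2)$ together with a volume lower bound by a standard covering argument: take a maximal set of points in $B(x,2r)$ that are pairwise at distance $\ge r$; the balls of radius $r/2$ centred at these points are disjoint and contained in $B(x,5r/2)$, so Bishop--Gromov bounds their number, and the balls of radius $r$ about the same points cover $B(x,2r)$ by maximality. The only care needed here is to use both the upper volume bound from Bishop--Gromov on the enclosing ball and a matching lower bound on each small ball; under $\mathrm{Ric}(g_0)\ge 0$ the monotonicity of the volume ratio supplies both, yielding a purely dimensional $C(n)$.

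With $(P1)$ and $(P2)$ established for $g_0$ with constants $C(n)$ and $\omega_{n+1}$, Theorem~\ref{maindegree} applies verbatim, since $g\in[g_0]$ lies in the conformal class of $g_0$ and the conclusion of that theorem is conformally invariant in its hypothesis. The resulting constant $\gamma(g_0)=2^{n+2}c\,\omega$ then depends only on $C(n)$ and $\omega_{n+1}$, hence only on $n$, giving inequality~(\ref{thmSteklovGeneral}) with $\gamma(n)$ as claimed. I do not expect any serious obstacle here; the content of the theorem is the reduction, and the genuine analytic work lives in Theorem~\ref{maindegree} and in the annulus-construction of~\cite{gyn}. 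The only point requiring mild attention is confirming that the volume-growth bound in $(P2)$ is required for all $r\ge 0$ rather than merely for small $r$, which is precisely where the global monotonicity of Bishop--Gromov (valid for \emph{all} radii under non-negative Ricci curvature) is essential and where a mere lower curvature bound would not suffice.
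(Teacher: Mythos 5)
Your proposal is correct and follows exactly the paper's own route: the paper proves Theorem~\ref{ricposDensity} as a direct consequence of Theorem~\ref{maindegree} together with Example~\ref{control}(2), which invokes the Bishop--Gromov comparison theorem to get $(P1)$ and $(P2)$ with purely dimensional constants. Your write-up simply fills in the standard details of the Bishop--Gromov volume-ratio monotonicity and the covering/packing argument that the paper leaves implicit.
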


This theorem is a direct consequence of Theorem \ref{maindegree} and
of Example~\ref{control}. Theorem~\ref{ricpos} is the special case
when $\delta\equiv 1$.

\medskip
If $n \ge 2$,
large isoperimetric ratio $I(\Omega)$ implies small eigenvalues
$\bar\sigma_k(\Omega,\delta)$.
\begin{cor}\label{corLargeIso}
  Under the assumptions of Theorem \ref{ricposDensity}, if a family of
  domains $\{\Omega_t\}_{0<t<1}$ is such that
  $\displaystyle\lim_{t\rightarrow 0}I(\Omega_t)=\infty,$ then, if $n
  \ge 2$ and for each density $\delta_t$ 
  on $\partial \Omega_t$, we have
  $$\lim_{t\rightarrow 0}\bar\sigma_k(\Omega_t,\delta_t)\rightarrow 0.$$
\end{cor}
This is false for $n=2$. See Example~\ref{exampleSurface}.

\subsection{Control of the isoperimetric ratio}
In general, it is difficult to estimate the isoperimetric ratio
$I(\Omega)$. We give two special situations where we have a
uniform lower estimate on it. This will be a consequence of
the inequality of Croke~\cite{croke} as presented by
Chavel~\cite[p.136]{cha2}.

\begin{prop}\label{croke}
  Let $N$ be a complete Riemannian manifold. For each $x \in N$, let
  $\mbox{inj}(x)$ the injectivity radius of $N$ at $x$.
  Given $p \in N$ and $\rho >0$, consider
  \begin{gather}\label{rCroke}
    r < \frac{1}{2}\left(inf_{x \in B(p,\rho)}\mbox{inj}(x)\right).
  \end{gather}
  
  \smallskip
  Then, for a each domain $\Omega \subset B(p,r)$, we have
  $I(\Omega)\ge C(n)$  
  for a constant $C(n)$ depending only on the dimension.
\end{prop}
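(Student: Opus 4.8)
The plan is to deduce the lower bound on $I(\Omega)$ directly from Croke's isoperimetric inequality, which is the one genuinely non-trivial ingredient and the reason the hypothesis is phrased in terms of the injectivity radius. Recall that Croke's inequality (see~\cite{croke}, and~\cite[p.~136]{cha2} for the presentation we follow) asserts the existence of a constant depending only on the dimension $m=\dim N=n+1$ with the following property: if $\Omega$ is a relatively compact domain whose closure is contained in a ball of radius less than half the injectivity radius of $N$ on the region it meets, then $|\partial\Omega|\ge c(m)\,|\Omega|^{(m-1)/m}$. Remarkably, no curvature assumption is needed; only the injectivity-radius control enters, precisely because the argument is integral-geometric and compares geodesics issuing from $\partial\Omega$ with the Euclidean model, a comparison that is valid as soon as the relevant minimizing geodesics are unique and the exponential maps involved are diffeomorphisms.

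First I would check that our hypotheses put us exactly in the range where Croke's inequality applies. By assumption $\Omega\subset B(p,r)$ and $r<\tfrac12\inf_{x\in B(p,\rho)}\mbox{inj}(x)$, so the injectivity radius exceeds $2r>\mbox{diam}(\Omega)$ on a whole neighbourhood of $\Omega$. Since every minimizing geodesic joining two points of $\Omega$ has length at most $\mbox{diam}(\Omega)<2r$, such a geodesic stays inside $B(p,3r)$; choosing $\rho$ large enough that $B(p,3r)\subset B(p,\rho)$ (any $\rho\ge 3r$ will do) guarantees that the uniform lower bound $\inf_{x\in B(p,\rho)}\mbox{inj}(x)$ controls the injectivity radius along every geodesic segment used in Croke's argument. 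Hence the minimizing geodesics between points of $\Omega$ are unique and the exponential maps are diffeomorphisms on the balls involved, which is exactly what Croke's inequality requires.

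With the hypotheses verified, the conclusion is immediate: applying Croke's inequality with $m=n+1$ gives $|\partial\Omega|\ge c(n+1)\,|\Omega|^{n/(n+1)}$, that is $I(\Omega)=|\Sigma|/|\Omega|^{n/(n+1)}\ge c(n+1)=:C(n)$, a constant depending only on $n$. The only point demanding care is the matching of hypotheses in the previous step---keeping track of the factor $\tfrac12$ and of the enlargement from $B(p,r)$ to a ball large enough to contain Croke's geodesics---together with the bookkeeping of the dimension convention ($\dim N=n+1$ while $\dim\Sigma=n$), which shifts Croke's exponents to the form appearing in $I(\Omega)$. I do not expect any deeper obstacle, since the substance of the estimate is entirely contained in Croke's theorem.
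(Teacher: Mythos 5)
Your proposal takes the same route as the paper: the paper gives no proof of Proposition~\ref{croke} beyond the sentence that it is a consequence of the inequality of Croke~\cite{croke} as presented by Chavel~\cite[p.136]{cha2}, and your write-up is exactly that deduction with the hypotheses checked. The dimensional bookkeeping is also right: with $m=\dim N=n+1$, Croke's inequality $|\partial\Omega|\ge c(m)|\Omega|^{(m-1)/m}$ reads $I(\Omega)\ge c(n+1)=:C(n)$.

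There is, however, one step that is not right as written: $\rho$ is part of the data of the proposition, not at your disposal, so ``choosing $\rho\ge 3r$'' is a quantifier slip. Note that hypothesis~\eqref{rCroke} is \emph{easiest} to satisfy when $\rho$ is small (the infimum is then taken over a smaller ball, hence is larger), so the small-$\rho$ case is precisely the hard one that your restriction $\rho\ge 3r$ excludes. The enlargement to $B(p,3r)$ is in fact unnecessary: Croke's theorem does not require minimizing geodesics of $N$ joining points of $\Omega$ (those may indeed leave $B(p,r)$); it requires only that the geodesic \emph{chords} of $\Omega$ --- geodesic segments lying in $\overline{\Omega}$ until they meet $\partial\Omega$ --- be minimizing, and chords never leave $\overline{B(p,r)}$. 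If $\mathrm{inj}(x)>2r$ for every $x\in\Omega$, then every chord $\gamma$ issuing from $x$ is minimizing: otherwise $\gamma$ would contain a cut point $\gamma(t_c)$ of $x$ with $d(x,\gamma(t_c))=t_c\ge \mathrm{inj}(x)>2r$, which is impossible for two points of $\overline{B(p,r)}$. Thus injectivity control on any set containing $\Omega$ (i.e.\ $\rho\ge r$) suffices; but when $\rho<r$ the stated hypothesis controls $\mathrm{inj}$ only near $p$, and neither your argument nor this repair covers that case. Since every application in the paper uses a global lower bound on the injectivity radius, the correct move is to state explicitly that the hypothesis is used with $B(p,\rho)\supset\Omega$, rather than to silently enlarge $\rho$.
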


If the injectivity radius of $N$ is strictly positive,
we can choose any $r<\frac{\mbox{inj}(N)}{2}$.

\subsubsection{Domains in space forms}
A special but very important case is when the ambient space $N$ is
a space form, that is the Euclidean space $\mathbb R^{n+1}$, the
hyperbolic space $\mathbb H^{n+1}$ or the sphere $\mathbb S^{n+1}$
with their natural metric of curvature $0,-1$ and $1$ respectively. 

\begin{thm}\label{spaceforms}
  For any bounded domain $\Omega$ with smooth boundary
  $\Sigma=\partial\Omega$ in $\R^{n+1}$,
  $\mathbb H^{n+1}$ or on an hemisphere of the sphere $\mathbb{S}^{n+1}$  and any
  $k\ge 1$, we have
  $$ 
  \bar\sigma_k(\Omega,\delta) \leq
  \frac{\gamma(n)}{I(\Omega)^{\frac{n-1}{n}}}k^{2/(n+1)}
  \leq C_nk^{2/(n+1)}
  $$
  where $C_n$ and $\gamma_n$ are constants depending only on $n$. 
\end{thm}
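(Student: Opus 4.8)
The plan is to reduce everything to Theorem \ref{ricposDensity} by verifying that each of the three space forms is conformally equivalent to a complete manifold with non-negative Ricci curvature, and then to supply a \emph{uniform lower bound} on the isoperimetric ratio $I(\Omega)$ so that the factor $I(\Omega)^{-(n-1)/n}$ in \eqref{thmSteklovGeneral} can be absorbed into a dimensional constant. The first inequality in the statement is then exactly Theorem \ref{ricposDensity}, and the second inequality reduces to the claim $I(\Omega)\ge c(n)>0$ for every bounded domain in the relevant space form.

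For the conformal hypothesis I would treat the three cases separately. The Euclidean case is immediate: $\mathbb R^{n+1}$ already has $Ric\equiv 0$, so $g_0=g$ works and Theorem \ref{ricposDensity} applies directly. For hyperbolic space I would use that $\mathbb H^{n+1}$ is conformally equivalent to the Euclidean ball (the Poincar\'e ball model), hence conformal to a flat metric, which has $Ric\equiv 0$; so again the conformal class of $g$ contains a metric of non-negative Ricci curvature. For the hemisphere of $\mathbb S^{n+1}$ I would use stereographic projection, under which the round sphere (minus a point) is conformally equivalent to $\mathbb R^{n+1}$; since we work on a hemisphere we avoid the missing point, and the hemisphere maps conformally into flat $\mathbb R^{n+1}$, which again has $Ric\equiv 0$. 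In all three cases the hypothesis of Theorem \ref{ricposDensity} is met, giving the first inequality with a dimensional constant $\gamma(n)$.

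It remains to bound $I(\Omega)$ from below, and this is where I expect the main work to lie. The natural tool is Proposition \ref{croke}, which gives $I(\Omega)\ge C(n)$ whenever $\Omega$ is contained in a metric ball of radius $r$ smaller than half the infimum of the injectivity radius over a suitable region. For $\mathbb R^{n+1}$ and $\mathbb H^{n+1}$ the injectivity radius is infinite, so Proposition \ref{croke} applies with no restriction on the size of $\Omega$, yielding $I(\Omega)\ge C(n)$ for \emph{all} bounded domains. The hemisphere is the delicate case: $\mathbb S^{n+1}$ has finite injectivity radius $\pi$, so Proposition \ref{croke} only controls domains of bounded diameter. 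The fix I would pursue is to observe that the classical isoperimetric inequality on the round sphere already forces $I(\Omega)\ge c(n)>0$ for every domain contained in a hemisphere: among all regions of a given volume inside a hemisphere the isoperimetric ratio is minimized by geodesic balls, and a direct estimate of $|\Sigma|/|\Omega|^{n/(n+1)}$ for spherical caps stays bounded away from zero throughout the hemisphere. Combining the relevant lower bound on $I(\Omega)$ with the first inequality and setting $C_n=\gamma(n)/C(n)^{(n-1)/n}$ yields the second inequality and completes the proof.

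The main obstacle is thus the hemisphere: one must either invoke the spherical isoperimetric inequality to get a uniform lower bound on $I(\Omega)$ valid across the whole hemisphere, or argue that the relevant comparison constant in Croke's inequality can be taken uniform on a hemisphere despite the finite injectivity radius. The Euclidean and hyperbolic cases, by contrast, follow cleanly since their infinite injectivity radius makes Proposition \ref{croke} globally applicable.
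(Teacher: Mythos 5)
Your proposal is correct and follows essentially the same route as the paper: verify the conformal hypothesis in each space form, apply Theorem~\ref{ricposDensity} (equivalently Theorem~\ref{maindegree}), and then absorb the factor $I(\Omega)^{-(n-1)/n}$ via a uniform dimensional lower bound on the isoperimetric ratio, obtained from Proposition~\ref{croke} in the Euclidean and hyperbolic cases where the injectivity radius is infinite. Your substitution of the classical spherical isoperimetric inequality for Croke's result on the hemisphere---where the condition $r<\mbox{inj}/2=\pi/2$ of Proposition~\ref{croke} is borderline---is precisely the alternative the paper itself records in the remark following Theorem~\ref{spaceforms}.
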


\begin{proof}
  The standard metrics on Euclidean space and on the sphere have
  non-negative Ricci curvature. The standard metric on the hyperbolic
  space is conformally equivalent to the Euclidean one. We can
  therefore apply Theorem~\ref{maindegree}, with $g_0=g$ one of these
  standard metric.
  
  The injectivity radii of Euclidean and hyperbolic space are
  infinite. That of the unit sphere is $\pi$. The proof is 
  completed by using Proposition~\ref{croke}.
\end{proof}

In particular, this proves Theorem~\ref{thmSpaceform}.

\begin{rem}
  It is also classically known that any domain $\Omega$ in Euclidean
  space, the hyperbolic space or an hemisphere, isoperimetric ratio
  bounded from   below by a constant depending on the dimension. This
  can be used instead of Croke's result in the above proof.
\end{rem}

\subsubsection{Domains inside a ball} 
In the case where the Ricci curvature of $N$ is non-negative, we
deduce the following 

\begin{cor} \label{domric}
  If the Ricci curvature of $N$ is non-negative and if
  $\Omega \subset B(p,r)$, where $r$ satisfy~\eqref{rCroke}, then
  \begin{gather*}
    \bar\sigma_k(\Omega,\delta) \leq C_n k^{2/(n+1)}
  \end{gather*}
  for some constant $C_n$ depending only on $n$.
\end{cor}

\section{Relation between the spectrum of the Dirichlet-to-Neumann
  operator and the spectrum of the Laplacian. }\label{sectionrelations}
Let $\Delta_\Sigma$ be the Laplacian acting on smooth functions of the
boundary $\Sigma=\partial M$ of a compact Riemannian manifold with
boundary. Let
$0=\lambda_1\leq\lambda_2(\Sigma)\leq\cdots\nearrow\infty$
be the spectrum of $\Delta_\Sigma$.
It is well known that $\Lambda$ is a first order
pseudodifferential operator and that its principal symbol is the square root of the
principal symbol of $\Delta_\Sigma$.
It follows that $\sigma_k\sim\sqrt{\lambda_k}$ as
$k\rightarrow\infty$. See for instance~(\cite[p. 38 and
p. 453]{taylorPDEII}, ~\cite{shamma}).

\begin{question}
  Can the eigenvalues $\sigma_k$ and $\lambda_l$ be compared to each
  other ?
\end{question}

Recently, Wang and Xia studied this question~\cite{wangxia2} for the
first non-zero eigenvalues of both operators. Under the assumption
that Ricci curvature of $M$ is non-negative and that the principal 
curvatures of $\partial M$ are bounded below by a positive constant
$c$, they proved that 
\begin{align*}
  \sigma_2 \le \frac{\sqrt{\lambda_2}}{nc}(\sqrt{\lambda_2}+\sqrt{\lambda_2-nc^2})\label{ineqWangXia}
\end{align*}

Note that Xia had previously proved~\cite{xia}, under the same hypothesis, that
$\lambda_2 \ge nc^2$.

\medskip
In~\cite{ceg1}, we study the control of the spectrum of the Laplacian
on a closed hypersurface by the isoperimetric ratio. The following is
a particular case of one of our results.

\medskip
\begin{thm}\label{laplacienCEG}
  Let $N$ be a complete Riemannian manifold  with non-negative Ricci
  curvature. Let $\Omega \subset N$ be a bounded domain  with smooth
  boundary $\Sigma = \partial \Omega$ contained in a ball of radius
  $r<\frac{\mbox{inj}(N)}{2}$.
  There is a constant $B_n$ depending only on dimension such that for
  any $k\geq 0$,
  
  \begin{gather*}
    \bar\lambda_k(\Sigma)\leq B_n I(\Omega)^{(n+2)/n}{k}^{2/n}
  \end{gather*}
  where
  $\bar\lambda_k(\Sigma)=\lambda_k(\Sigma) {\vert  \Sigma \vert}^{2/n}$ 
  are the normalized eigenvalues of the Laplacian.
\end{thm}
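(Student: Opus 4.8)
The plan is to run the Grigor'yan--Netrusov--Yau construction on the \emph{boundary measure} $\mu=dv_\Sigma$, exactly as in the proof of Theorem~\ref{maindegree}, replacing the bulk H\"older estimate used there by one adapted to the $n$-dimensional hypersurface $\Sigma$. Since $N$ has $\mathrm{Ric}\ge 0$ and $\Sigma\subset B(p,r)$ with $r<\mathrm{inj}(N)/2$, Example~\ref{control} guarantees that $(N,d_{g_0})$ satisfies $(P1)$ and $(P2)$ with dimensional constants. I would therefore apply Theorem~1 and Corollary~3.12 of~\cite{gyn} to the metric measured space $(N,d_{g_0},\mu)$ with $\mu=dv_\Sigma$, producing $2k$ annuli $A_1,\dots,A_{2k}$ whose doublings $2A_i$ are mutually disjoint and with $\mu(A_i)\ge c_n|\Sigma|/k$. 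As in~(\ref{firstk}), a pigeonhole argument lets me keep the $k$ annuli satisfying $|\Sigma\cap 2A_i|=\mu(2A_i)\le |\Sigma|/k$.

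For each retained annulus I take the Lipschitz cut-off $h_i$ equal to $1$ on $A_i$ and decaying linearly in the ambient distance across the collar $2A_i\setminus A_i$. These functions are disjointly supported on $\Sigma$, so $\lambda_k(\Sigma)\le\max_i R(h_i)$ for the intrinsic Rayleigh quotient $R(f)=\int_\Sigma|\nabla_\Sigma f|^2\,dv_\Sigma\big/\int_\Sigma f^2\,dv_\Sigma$. The denominator is immediate: $\int_\Sigma h_i^2\,dv_\Sigma\ge\mu(A_i)\ge c_n|\Sigma|/k$. Because the tangential gradient of a distance function has norm at most $1$, one has $|\nabla_\Sigma h_i|\le|\phi_i'|$, so the numerator is a surface integral of the ambient gradient supported in the collar. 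Here I would insert a H\"older step tuned to $\dim\Sigma=n$,
$$\int_\Sigma|\nabla_\Sigma h_i|^2\,dv_\Sigma\le\Big(\int_\Sigma|\nabla_\Sigma h_i|^{n}\,dv_\Sigma\Big)^{2/n}\,|\Sigma\cap 2A_i|^{(n-2)/n}\le\Big(\int_\Sigma|\nabla_\Sigma h_i|^{n}\,dv_\Sigma\Big)^{2/n}\Big(\frac{|\Sigma|}{k}\Big)^{(n-2)/n},$$
the gain being that the first factor is the conformally natural $n$-energy on $\Sigma$, which is insensitive to the overall scale.

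The main obstacle is this remaining $n$-energy $\int_\Sigma|\nabla_\Sigma h_i|^{n}\,dv_\Sigma$. Unlike the bulk $(n{+}1)$-energy in Theorem~\ref{maindegree}, which $(P2)$ bounds by a universal constant, this is a \emph{surface} integral over a collar, and there is no pointwise upper bound on the area $|\Sigma\cap B(a,\rho)|$ inside a small ball --- a hypersurface may crinkle arbitrarily. This is precisely where the isoperimetric hypothesis must enter, and globally rather than pointwise: heavy local concentration of area forces $|\Sigma|=I(\Omega)\,|\Omega|^{n/(n+1)}$ to be large. Quantitatively I expect an estimate of the shape $\int_\Sigma|\nabla_\Sigma h_i|^{n}\,dv_\Sigma\le C_n\,I(\Omega)^{(n+2)/2}$, obtained by feeding the global bounds $|\Sigma|=I(\Omega)|\Omega|^{n/(n+1)}$ and $|\Omega|\le\omega_n r^{n+1}$ into the annular scales produced by~\cite{gyn} and controlling how $dv_\Sigma$ distributes across the collars; this averaged area control is the technical heart of~\cite{ceg1}.

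Granting this estimate, substitution gives $R(h_i)\le C_n'\,I(\Omega)^{(n+2)/n}|\Sigma|^{-2/n}k^{2/n}$, since the exponents combine as $(n-2)/n-1=-2/n$ and $1-(n-2)/n=2/n$, and hence $\bar\lambda_k(\Sigma)=\lambda_k(\Sigma)|\Sigma|^{2/n}\le B_n\,I(\Omega)^{(n+2)/n}k^{2/n}$. The bookkeeping is arranged so that the enclosing radius $r$ cancels, which is both the delicate point and the reason the hypotheses $\mathrm{Ric}(N)\ge 0$ and $r<\mathrm{inj}(N)/2$ are used: they make the packing and growth constants dimensional, so that no ambient-geometric quantity other than $I(\Omega)$ survives.
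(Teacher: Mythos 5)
First, a point of comparison: the paper itself contains no proof of Theorem~\ref{laplacienCEG}. It is imported from~\cite{ceg1} (``the following is a particular case of one of our results''), so your sketch has to be judged against what such a proof actually requires, not against an internal argument.

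Your proposal has a genuine gap, and it sits exactly where you flag it: the estimate $\int_\Sigma|\nabla_\Sigma h_i|^{n}\,dv_\Sigma\le C_n\,I(\Omega)^{(n+2)/2}$ is never proved; you only say you ``expect'' it and attribute it to~\cite{ceg1}. Everything you do establish (the GNY annuli for $\mu=dv_\Sigma$, the pigeonhole reordering, the H\"older step with exponent $n=\dim\Sigma$, the final exponent bookkeeping) is the routine part, transplanted from the proof of Theorem~\ref{maindegree}; the single nontrivial step is the one you defer, so this is a plan rather than a proof. Worse, the deferred estimate is not merely unproven but false in the form you need it, because it is a per-annulus bound depending only on the global quantity $I(\Omega)$. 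Concretely, let $\Omega\subset\mathbb{R}^{n+1}$ be a unit ball and crinkle $\Sigma$ inside the region $\{\epsilon/2<|x-a|<2\epsilon\}$ so that it carries surface area of order $|\Sigma|/k$ there, arranged so that the annulus $A=\{\epsilon<|x-a|<2\epsilon\}$ satisfies $\mu(A)\ge c|\Sigma|/k$ and $\mu(2A)\le|\Sigma|/k$. This changes $|\Sigma|$, hence $I(\Omega)$, only by a factor $1+O(1/k)$, and nothing in the GNY theorem or in your pigeonhole step excludes $A$ from the retained family; yet its cut-off has $\int_\Sigma|\nabla_\Sigma h|^n\,dv_\Sigma$ of order $\epsilon^{-n}|\Sigma|/k$, which blows up as $\epsilon\to0$ while $I(\Omega)$ stays bounded. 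The root cause is the one you name yourself: $dv_\Sigma$ admits no upper density bound on ambient balls, and GNY gives no lower bound on the scales of its annuli relative to such concentrations. So the difficulty cannot be absorbed by a ``tuned H\"older step''; it requires a different mechanism --- in~\cite{ceg1} this involves local, Croke-type isoperimetric control relating bulk measure to boundary measure, which is also the only place the hypothesis $r<\mathrm{inj}(N)/2$ can meaningfully enter. Tellingly, your sketch never genuinely uses that hypothesis, since $\mathrm{Ric}\ge 0$ alone already yields $(P1)$ and $(P2)$; that is a reliable sign that the argument, as proposed, is missing the ingredient the hypothesis exists to supply.
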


\medskip
Combining Theorem~\ref{laplacienCEG} and Corollary~\ref{spaceforms},
we get

\begin{thm}\label{comparison}
  Let $N$ be a complete Riemannian manifold of dimension $(n+1)$ with
  non-negative Ricci curvature. There exists a constant $\kappa_n$
  depending only on dimension such that for any bounded domain
  $\Omega \subset N$ with boundary $\Sigma = \partial\Omega$ 
  contained in a ball of radius $r<\frac{\mbox{inj}(N)}{2}$ the
  following holds:
  
  \begin{gather}\label{comp1}
    \bar\lambda_k(\Sigma)\bar \sigma_l(\Omega)\leq
    \kappa_n\left(\frac{\vert \Sigma\vert}{\vert\Omega\vert}\right)^{3/n}k^{2/n}l^{2/(n+1)}.
  \end{gather}
  
  In the special case where $N$ is the Euclidean space 
  $\mathbb R^{n+1}$, the injectivity radius in each point is
  $\infty$, so that that is no further restrictions on $\Omega$,
  and Inequalities (\ref{comp1}) and (\ref{comp2}) are true for all bounded domains.
\end{thm}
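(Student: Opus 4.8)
The plan is to obtain \eqref{comp1} by simply multiplying the two a priori eigenvalue bounds already established, so that the powers of the isoperimetric ratio combine into a single harmless exponent. All of the analytic content sits in Theorem~\ref{laplacienCEG} and in the Steklov estimate of Theorem~\ref{ricposDensity}; what remains is to check that their hypotheses are met simultaneously and to keep careful track of the exponents.

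First I would verify the hypotheses. Both inputs require $N$ to have non-negative Ricci curvature and $\Omega$ to be a bounded domain whose boundary $\Sigma$ lies in a ball of radius $r<\mbox{inj}(N)/2$; these are exactly the standing assumptions of Theorem~\ref{comparison}. In particular, non-negative Ricci curvature trivially places $g$ in the conformal class of a non-negatively curved metric, so Theorem~\ref{ricposDensity} applies with the choice $\delta\equiv 1$.

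Next I would record the two estimates and multiply them. Theorem~\ref{laplacienCEG} gives $\bar\lambda_k(\Sigma)\le B_n\,I(\Omega)^{(n+2)/n}k^{2/n}$, while Theorem~\ref{ricposDensity} (with $\delta\equiv 1$) gives $\bar\sigma_l(\Omega)\le \gamma(n)\,I(\Omega)^{-(n-1)/n}l^{2/(n+1)}$. Taking the product and using that the exponents of the isoperimetric ratio add to $\frac{n+2}{n}-\frac{n-1}{n}=\frac{3}{n}$, I obtain $\bar\lambda_k(\Sigma)\,\bar\sigma_l(\Omega)\le B_n\gamma(n)\,I(\Omega)^{3/n}k^{2/n}l^{2/(n+1)}$. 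Inserting the definition $I(\Omega)=|\Sigma|/|\Omega|^{n/(n+1)}$ then rewrites $I(\Omega)^{3/n}$ in terms of $|\Sigma|$ and $|\Omega|$, yielding \eqref{comp1} with $\kappa_n=B_n\gamma(n)$.

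The only genuine subtlety — really bookkeeping rather than a true obstacle — is reconciling $I(\Omega)^{3/n}$ with the factor $(|\Sigma|/|\Omega|)^{3/n}$ on the right-hand side of \eqref{comp1}: the two differ by a bounded power of $|\Omega|$, which is harmless precisely because the constraint $r<\mbox{inj}(N)/2$ together with property $(P2)$ bounds $|\Omega|$, allowing this factor to be absorbed into the constant if one insists on the stated form. For the Euclidean specialization $N=\R^{n+1}$ the injectivity radius is infinite, so the ball hypothesis is vacuous and the whole argument goes through for every bounded domain, which gives the final assertion of the theorem.
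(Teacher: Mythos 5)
Your core step --- multiplying the bound of Theorem~\ref{laplacienCEG} by the Steklov bound of Theorem~\ref{ricposDensity} with $\delta\equiv 1$ --- is exactly the paper's proof (the paper offers nothing beyond ``combining'' the two results), and your exponent arithmetic $\frac{n+2}{n}-\frac{n-1}{n}=\frac{3}{n}$ is correct. At that point you have established
\begin{equation*}
\bar\lambda_k(\Sigma)\,\bar\sigma_l(\Omega)\;\le\; B_n\gamma(n)\, I(\Omega)^{3/n}\, k^{2/n}\, l^{2/(n+1)}
\;=\; B_n\gamma(n)\,\frac{|\Sigma|^{3/n}}{|\Omega|^{3/(n+1)}}\, k^{2/n}\, l^{2/(n+1)},
\end{equation*}
which, after dividing by $|\Sigma|^{3/n}$, is precisely the unnormalized inequality \eqref{comp2}.

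The gap is in your last step. The factor separating $I(\Omega)^{3/n}$ from $\left(|\Sigma|/|\Omega|\right)^{3/n}$ is $|\Omega|^{3/(n(n+1))}$, and your claim that it can be absorbed into $\kappa_n$ because $r<\mbox{inj}(N)/2$ together with $(P2)$ bounds $|\Omega|$ is false: the injectivity radius may be infinite (it is in $\R^{n+1}$), so $r$ is not bounded by any dimensional constant, and the theorem itself asserts the Euclidean case for \emph{all} bounded domains, where $|\Omega|$ is arbitrary. In fact no argument can close this gap, because \eqref{comp1} as printed cannot be correct: its left-hand side is invariant under dilations $g\mapsto t^2g$, while $\left(|\Sigma|/|\Omega|\right)^{3/n}$ scales like $t^{-3/n}$; for round balls $B(0,t)\subset\R^{n+1}$ of large radius the right-hand side tends to $0$ while the left-hand side is a positive constant (for $k,l\ge 2$). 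The factor $\left(|\Sigma|/|\Omega|\right)^{3/n}$ in \eqref{comp1} is evidently a misprint for $I(\Omega)^{3/n}=|\Sigma|^{3/n}/|\Omega|^{3/(n+1)}$, which is what the product argument yields and what is consistent with \eqref{comp2}. The honest conclusion of your (otherwise correct) computation is this corrected form; the absorption step you invented to match the printed form introduces a genuine error where the source text has only a typo.
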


 Without the normalization, we have
 \begin{gather}   \label{comp2}
   \lambda_k(\Sigma)\sigma_l(\Omega) m(\Sigma,\delta) \leq
   \kappa_n\frac{k^{2/n}l^{2/(n+1)}}{\vert\Omega\vert^{3/(n+1)}}.
 \end{gather}

\begin{rem}
  In comparison with~\cite{wangxia2}, we make no assumption on the
  convexity of $\Omega$. We also have comparison for all
  eigenvalues. Note however that our method does not give any
  sharpness.
\end{rem}

A remarkable feature of this inequality is that large eigenvalues of
the Laplacian are seen to impose small eigenvalues of the
Dirichlet-to-Neumann map.

\begin{cor}
  Under the assumptions of Theorem~\ref{comparison}, if a family of
  domains $\{\Omega_t\}_{0<t<1}$ of volume one with boundaries
  $\Sigma_t=\partial\Omega_t$ is such that
  $\displaystyle\lim_{t\rightarrow 0}\lambda_k(\Omega_t)=\infty,$
  then, if $n\geq 2$ we have for each $l\geq 1$
  $$\lim_{t\rightarrow 0}\bar\sigma_l(\Omega_t)\rightarrow 0.$$
\end{cor}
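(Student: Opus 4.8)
The plan is to reduce the whole statement to the isoperimetric phenomenon already recorded in Corollary~\ref{corLargeIso}, which says that a blow-up of the isoperimetric ratio forces $\bar\sigma_l$ to vanish when $n\ge 2$. Thus the real task is to show that the hypothesis $\lambda_k(\Sigma_t)\to\infty$ (which is how I read ``$\lambda_k(\Omega_t)\to\infty$'', $\lambda_k$ being the Laplace spectrum of the boundary), combined with the normalization $|\Omega_t|=1$, forces $I(\Omega_t)\to\infty$. The first observation is purely bookkeeping: since $|\Omega_t|=1$, the isoperimetric ratio collapses to $I(\Omega_t)=|\Sigma_t|/|\Omega_t|^{n/(n+1)}=|\Sigma_t|$, so the goal reduces to $|\Sigma_t|\to\infty$.

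Next I would apply Theorem~\ref{laplacienCEG} in reverse. Its hypotheses are met: $N$ has non-negative Ricci curvature and each $\Sigma_t$ lies in a ball of radius $r<\mathrm{inj}(N)/2$, exactly as assumed in Theorem~\ref{comparison}. That theorem gives $\bar\lambda_k(\Sigma_t)\le B_n\,I(\Omega_t)^{(n+2)/n}k^{2/n}$. Unwinding $\bar\lambda_k(\Sigma_t)=\lambda_k(\Sigma_t)|\Sigma_t|^{2/n}$ and substituting $I(\Omega_t)=|\Sigma_t|$, the factors $|\Sigma_t|^{2/n}$ cancel and I obtain the clean bound $\lambda_k(\Sigma_t)\le B_n|\Sigma_t|k^{2/n}$, i.e. $|\Sigma_t|\ge \lambda_k(\Sigma_t)/(B_nk^{2/n})$. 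Since $k$ is fixed and $\lambda_k(\Sigma_t)\to\infty$ by hypothesis, this yields $I(\Omega_t)=|\Sigma_t|\to\infty$. Finally, as $\mathrm{Ric}(g)\ge 0$ makes $g$ trivially conformally non-negative Ricci, Corollary~\ref{corLargeIso} applies for $n\ge 2$ and gives $\bar\sigma_l(\Omega_t)\to 0$ for every fixed $l$, which is the claim.

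I expect the only genuinely subtle point to be resisting the temptation to argue directly from inequality~\eqref{comp1}. Rearranging it with $|\Omega_t|=1$ and $\bar\lambda_k(\Sigma_t)=\lambda_k(\Sigma_t)|\Sigma_t|^{2/n}$ gives only $\bar\sigma_l(\Omega_t)\le \kappa_n|\Sigma_t|^{1/n}k^{2/n}l^{2/(n+1)}/\lambda_k(\Sigma_t)$, and the factor $|\Sigma_t|^{1/n}$ in the numerator is not controlled by $\lambda_k(\Sigma_t)$: the constraints permit a regime like $|\Sigma_t|\sim\lambda_k(\Sigma_t)^n$, for which this quotient stays bounded away from $0$. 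Hence the direct estimate is too weak, and it is precisely the detour through the isoperimetric ratio, feeding $I(\Omega_t)\to\infty$ into Corollary~\ref{corLargeIso}, that makes the argument go through.
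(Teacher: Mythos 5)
Your proof is correct and is essentially the argument the paper intends: the corollary is stated without proof, but the paper's own description at the start of Section~\ref{large} ("the isoperimetric ratio $I(\Omega)$ has to be big\dots and this implies the presence of small eigenvalues") is precisely your two-step route, namely $\lambda_k(\Sigma_t)\to\infty$ forces $I(\Omega_t)=|\Sigma_t|\to\infty$ via Theorem~\ref{laplacienCEG} (using $|\Omega_t|=1$), and then Corollary~\ref{corLargeIso} (equivalently Theorem~\ref{ricposDensity} with exponent $(n-1)/n>0$ for $n\ge 2$) gives $\bar\sigma_l(\Omega_t)\to 0$. Your closing observation is also accurate and worth recording: although the statement is attached to Theorem~\ref{comparison}, rearranging inequality~\eqref{comp1} alone leaves the uncontrolled factor $|\Sigma_t|^{1/n}$, so the conclusion genuinely requires returning to the two ingredients of that theorem's proof rather than to the product inequality itself.
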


\section{Surfaces}
The situation for surface is special.
We begin by a proof of the upper bound of $\sigma_k$ in term of the
genus.
\begin{proof}[Proof of Theorem~\ref{surfaces}]
  This is a modification of the proof of Theorem~\ref{maindegree}.
  
  By gluing a disk on each boundary components of $M$, we can see $M$
  as a domain in a a compact surface $S$ of genus $\gamma$.
  This closed surface can be represented as a branched cover over
  $\mathbb{S}^2$ with degree $d=\lfloor\frac{\gamma+3}{2}\rfloor$
  (See~\cite{gunning} for instance).
  
  On $\mathbb{S}^2$ we consider the usual spherical distance $d$ and
  we define a Borel measure
  $\mu=\psi_* \left(\delta dv_{\Sigma}\right)$.
  That is, the measure of an open set $\mathcal{O}\subset\mathbb{S}^2$
  is 
  \begin{align}
    \mu(\mathcal{O})=\int_{\psi^{-1}(\mathcal{O})\cap\Sigma}\delta\,dv_{\Sigma}.
  \end{align}
  In particular,
  $$\mu(\mathbb{S}^2)=\vert \Sigma \vert m(\Sigma,\delta).$$
  
  It follows from Theorem~1 and Corollary~3.12 of~\cite{gyn} applied
  to the metric measured space $(\mathbb{S}^2,d,\mu)$ that there exist
  $2k$ annuli 
  $A_1,...,A_{2k} \subset \mathbb{S}^2$ with
  \begin{gather}\label{IneqMuAiboundedS}
    \mu(A_i) \ge \frac{\mu(\mathbb{S}^2)}{ck}.
  \end{gather}
  Because the annuli $2A_i$ are mutually disjoint, so are the sets 
  $B_i=\psi^{-1}(2A_i)$. These sets can be reordered so that the first
  $k$ of them satisfy 
  \begin{gather}\label{firstkS}
    \vert B_i \vert_g
    \leq\frac{\vert M \vert_g }{k}\ \ \ (i=1,\cdots,k).
  \end{gather}
  
  \medskip
  Let $A=\{x \in \mathbb{S}^2: r<d(x,a)<R\}$ be one of the above
  annuli and let $h$ be a function supported in $2A$. Let
  $f=h\circ\psi$ be 
  the lift of this function to $M$. It is supported in the set
  $B=\psi^{-1}(2A)$.
  
  Taking~(\ref{firstkS}) into account, it follows from conformal
  invariance of the Dirichlet energy that
  \begin{align*}
    \int_{B \cap \Omega} \vert \nabla_g f\vert^2\,dv_g
    &\leq
    \left(\mbox{deg}(\psi)\int_{2A} \vert \nabla_{g_0} h \vert^{n+1}\,dv_{g_0}\right)^{2/(n+1)}
    \left(\frac{\vert \Omega \vert_g}{k}\right)^{1-2/(n+1)}.
  \end{align*}
  
  The rest of the proof is almost identical to that of
  Theorem~\ref{maindegree} and is left to the reader.
\end{proof}

In Corollary~\ref{corLargeIso} it was mentioned that for manifold of
dimension at least three, a large isoperimetric ratio implies small
Steklov eigenvalues. The next example shows that this is false for
surfaces.
\begin{exa}\label{exampleSurface}
  Let $M$ be a compact Riemannian manifold with metric $g$.
  Let $f\in C^{\infty}(\bar M)$ be a smooth function vanishing on the
  boundary $\partial M$.  Consider a conformal perturbation
  $\tilde{g}=e^{f}g$ of the original metric. It is well known that the
  Laplacian is conformally invariant in dimension two. Moreover,
  because $\tilde{g}=g$ on $\partial M$, the normal derivative is also
  preserved. It follows that the the Dirichlet-to-Neumann map
  induced by $\tilde{g}$ is the same as that induced by by $g$. In
  particular, they have the same spectrum.

  \medskip
  On the other hand the measure of the surface is given by
  \begin{gather*}
    |M|_{\tilde{g}}=\int_Me^f\,dg.
  \end{gather*}
  By taking a function $f$ that decays fast away from the boundary, we
  can make this quantity as small as we want.
  In other words, the isoperimetric ratio
  $I(M)=\frac{|\partial M|}{\sqrt{|M|_g}}$ will become very large.
\end{exa}

\section{Construction of large eigenvalues}
\label{large}
The behavior of the Steklov spectrum depends on the interior of the
domain in an essential way. For a closed Riemannian manifold
$\Sigma$ with large eigenvalue $\lambda_k$ of the Laplacian, embedding
as an hypersurface in Euclidean space forces very small Steklov
eigenvalues.  This comes from the fact that, by~\cite{ceg1} the
isoperimetric ratio $I(\Omega)$ has to be big, with $\Sigma =\partial
\Omega$, and this implies the presence of small eigenvalues. If we
embed $\Sigma$ as the cross-section of a cylinder
$\Sigma\times\mathbb{R}$ with its product metric, we will see that
exactly the opposite will happen. This shows that our geometric
assumptions are necessary.

\medskip
\begin{lemma}\label{calculation}
  Let $\Sigma$ be a closed Riemannian manifold of volume one. Let the
  spectrum of its Laplace operator $\Delta_{\Sigma}$ be 
  $$0=\lambda_1<\lambda_2\leq\lambda_3\cdots\nearrow\infty$$
  and let $(u_k)$ be an orthonormal basis of $L^2(\Sigma)$ such that
  $$\Delta_{\Sigma} u_k=\lambda_ku_k.$$
  Let $N=\mathbb{R}\times\Sigma$.
  On the domain $\Omega=[-L,L]\times\Sigma \subset N$, a complete
  system of orthogonal eigenfunctions of the Dirichlet-to-Neumann map
  is given by  
  \begin{gather*}
    1,\ t,\
    \cosh(\sqrt{\lambda_k}t)f_k(x),\
    \sinh(\sqrt{\lambda_k}t)f_k(x)
  \end{gather*}
  with eigenvalues
  \begin{gather*}
    0, 1/L,\ 
    \sqrt{\lambda_k}\tanh (\sqrt{\lambda_k}L)<
    \sqrt{\lambda_k}\coth (\sqrt{\lambda_k}L).
  \end{gather*}
\end{lemma}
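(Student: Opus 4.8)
The plan is to exploit the product structure of $N=\mathbb{R}\times\Sigma$ via separation of variables, and then to reduce the completeness question to the fact that $\{u_k\}$ is an orthonormal basis of $L^2(\Sigma)$ (I write $u_k$ for the functions denoted $f_k$ in the statement). On the product, with coordinates $(t,x)$, the Laplacian splits as $\Delta=-\partial_t^2+\Delta_\Sigma$, so a function $f(t,x)$ is harmonic on $\Omega=[-L,L]\times\Sigma$ exactly when $\partial_t^2 f=\Delta_\Sigma f$. Substituting each of the four proposed functions and using $\Delta_\Sigma u_k=\lambda_k u_k$ reduces harmonicity to the elementary identities $(\cosh(\sqrt{\lambda_k}\,t))''=\lambda_k\cosh(\sqrt{\lambda_k}\,t)$ and likewise for $\sinh$; the functions $1$ and $t$ are harmonic trivially. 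This first step is purely computational.

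Next I would compute the Dirichlet-to-Neumann map directly. The boundary $\Sigma_+\sqcup\Sigma_-=\{t=L\}\sqcup\{t=-L\}$ is two copies of $\Sigma$, with outward normals $\pm\partial_t$. For each candidate I would evaluate the outward normal derivative on $\Sigma_\pm$ and divide by the boundary trace, checking that the quotient is a constant equal to the claimed eigenvalue. For example, $\cosh(\sqrt{\lambda_k}\,t)u_k$ has normal derivative $\sqrt{\lambda_k}\sinh(\sqrt{\lambda_k}L)u_k$ and trace $\cosh(\sqrt{\lambda_k}L)u_k$ on $\Sigma_+$, producing the factor $\sqrt{\lambda_k}\tanh(\sqrt{\lambda_k}L)$; on $\Sigma_-$ the sign change in the normal is cancelled by the parity of the hyperbolic functions, so the same eigenvalue appears on both components. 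The $\sinh$ modes similarly yield $\sqrt{\lambda_k}\coth(\sqrt{\lambda_k}L)$, the function $t$ yields $1/L$, and $1$ yields $0$.

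The real content, and the main obstacle, is completeness: one must argue that these eigenfunctions exhaust the Steklov spectrum, not merely that they belong to it. My plan is to pass to boundary traces and work in $L^2(\partial\Omega)\cong L^2(\Sigma)\oplus L^2(\Sigma)$. The even modes $\cosh(\sqrt{\lambda_k}\,t)u_k$ (with $1$ as the $k=1$ case) restrict to a positive multiple of $(u_k,u_k)$, while the odd modes $\sinh(\sqrt{\lambda_k}\,t)u_k$ (with $t$ as the $k=1$ case) restrict to a positive multiple of $(-u_k,u_k)$. Since $\{u_k\}$ is an orthonormal basis of $L^2(\Sigma)$, the vectors $\{(u_k,u_k),(-u_k,u_k)\}_k$ form, after normalization, an orthogonal basis of $L^2(\Sigma)\oplus L^2(\Sigma)$: a one-line inner-product computation using only $\langle u_k,u_l\rangle=\delta_{kl}$ shows that distinct even traces are orthogonal, distinct odd traces are orthogonal, and every even trace is orthogonal to every odd trace. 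Hence the traces of the listed functions form a complete orthogonal system in $L^2(\partial\Omega)$. Because the Dirichlet-to-Neumann operator is self-adjoint with discrete spectrum, a complete orthogonal family of its eigenfunctions must be the entire eigenbasis, which closes the argument. Verifying orthogonality by hand (rather than invoking distinctness of eigenvalues) is important, since numerically equal eigenvalues may well occur.

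One subtlety I would flag explicitly is the degeneration at $k=1$: here $\lambda_1=0$ and $u_1$ is the constant $1$, so the even mode $\cosh(0)u_1$ is precisely the listed function $1$ with eigenvalue $0$, while the odd mode $\sinh(\sqrt{\lambda_1}\,t)u_1\equiv 0$ collapses. This is exactly why the linear function $t$ must be adjoined by hand as the odd constant mode; it recovers the missing eigenfunction, with eigenvalue $1/L=\lim_{\lambda\to 0}\sqrt{\lambda}\coth(\sqrt{\lambda}L)$, so that the even/odd bookkeeping remains complete across all $k\ge 1$.
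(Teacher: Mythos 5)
Your proof is correct and follows the same strategy as the paper's: verify that the listed functions are Steklov eigenfunctions and observe that their boundary traces form an orthogonal basis of $L^2(\partial\Omega)$, so that completeness follows from self-adjointness of the Dirichlet-to-Neumann operator. The paper states this in one line and leaves the verification to the reader; you have simply carried out those computations (including the correct handling of the degenerate $k=1$ odd mode) in full.
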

\begin{proof}
  It is enough to check that these functions are Steklov
  eigenfunctions since their restriction to the boundary form a basis
  $L^2$.
\end{proof}

\begin{prop}
  Let $\Sigma$ be a closed manifold of dimension $\geq 3$. On the
  product manifold $N=\Sigma\times\mathbb{R}$ there exists a complete
  Riemannian metric $g$ and a sequence of bounded domains $\Omega_i$
  such that
  \begin{gather*}
    \lim_{i\rightarrow\infty}\overline{\sigma}_2(\Omega_i)=\infty,
    \mbox{ and }\lim_{i\rightarrow\infty}I(\Omega_i)=\infty.
  \end{gather*}
\end{prop}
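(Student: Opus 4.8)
The plan is to realise the cylinders of Lemma~\ref{calculation} as honest domains inside a single complete metric on $N=\Sigma\times\mathbb{R}$, while letting the cross-sectional geometry vary along $\mathbb{R}$ so as to force the first non-zero Laplace eigenvalue of the slice to blow up. The decisive input is the theorem of Colbois and Dodziuk on metrics with large first eigenvalue: since $\dim\Sigma\geq 3$, there is a sequence of Riemannian metrics $h_i$ on $\Sigma$, each of volume one, whose first non-zero Laplace eigenvalue satisfies $\lambda_2(\Sigma,h_i)\to\infty$. It is exactly here that the hypothesis $\dim\Sigma\geq 3$ is used: in dimension two the normalized quantity $\lambda_2\,|\Sigma|^{2/n}$ is bounded in terms of the genus, which is consistent with, and dual to, Theorem~\ref{surfaces}. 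The relevance of $\lambda_2$ is that the explicit spectrum of Lemma~\ref{calculation} gives $\sigma_2\le\sqrt{\lambda_2}$ for a slice, so large Steklov eigenvalues on a product cylinder are possible only when the slice has large $\lambda_2$.

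First I would assemble the metric $g$. Choose pairwise disjoint bounded intervals $[a_i,b_i]\subset\mathbb{R}$ with $b_i<a_{i+1}$, set $g=dt^2+h_i$ on $[a_i,b_i]\times\Sigma$, and on each complementary interval interpolate through a smooth path $t\mapsto h(t)$ of Riemannian metrics on $\Sigma$ joining the metrics used on the two adjacent blocks. The result $g=dt^2+h(t)$ is a smooth metric of product type on $\Sigma\times\mathbb{R}$; since $g\ge dt^2$ in the $\mathbb{R}$-direction and every slice $\Sigma\times\{t\}$ is compact, any divergent path has infinite length, so $(N,g)$ is complete.

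Next I would select the domains. Inside the block $[a_i,b_i]\times\Sigma$, where $g$ is the genuine product metric $dt^2+h_i$ with $\mathrm{vol}(h_i)=1$, set $\Omega_i=[\alpha_i,\alpha_i+2L_i]\times\Sigma$ with $L_i=1/\sqrt{\lambda_2(\Sigma,h_i)}$; since $L_i\to0$ this fits inside the block for $i$ large. By translation invariance in $t$, Lemma~\ref{calculation} applies verbatim with $\Sigma$ carrying $h_i$: the positive Steklov eigenvalues are $1/L_i$ together with $\sqrt{\lambda_k}\tanh(\sqrt{\lambda_k}L_i)$ and $\sqrt{\lambda_k}\coth(\sqrt{\lambda_k}L_i)$ for $k\ge2$, and since $\lambda\mapsto\sqrt{\lambda}\tanh(\sqrt{\lambda}L)$ is increasing while $\coth>\tanh$, one gets
\[
  \sigma_2(\Omega_i)=\min\Bigl\{1/L_i,\ \sqrt{\lambda_2(h_i)}\,\tanh\bigl(\sqrt{\lambda_2(h_i)}\,L_i\bigr)\Bigr\}=\sqrt{\lambda_2(h_i)}\,\tanh(1),
\]
using $\sqrt{\lambda_2(h_i)}\,L_i=1$. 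Hence $\sigma_2(\Omega_i)\to\infty$.

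Finally I would read off the two conclusions with $\delta\equiv1$, so that $m(\Sigma,\delta)=1$. The boundary $\partial\Omega_i$ consists of two copies of $(\Sigma,h_i)$, whence $|\partial\Omega_i|=2$ is constant, and therefore $\bar\sigma_2(\Omega_i)=\sigma_2(\Omega_i)\,|\partial\Omega_i|^{1/n}=2^{1/n}\tanh(1)\sqrt{\lambda_2(h_i)}\to\infty$. Meanwhile $|\Omega_i|=\mathrm{vol}(h_i)\cdot2L_i=2L_i\to0$ while $|\partial\Omega_i|=2$, so the isoperimetric ratio $I(\Omega_i)=2/(2L_i)^{n/(n+1)}\to\infty$. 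The only genuinely substantial step is the existence of the cross-sectional metrics $h_i$, i.e.\ the Colbois--Dodziuk input; everything else is bookkeeping, namely gluing product blocks into one complete metric and evaluating the explicit spectrum of Lemma~\ref{calculation}. I expect that step, together with the observation that these slices necessarily have Ricci curvature unbounded below (so that the lower Ricci bound announced in the introduction would require a more delicate choice of slice), to be the main conceptual point, since it is precisely the failure of a lower Ricci bound that lets $\bar\sigma_2$ and $I$ diverge simultaneously.
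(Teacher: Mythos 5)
Your proof is correct and follows essentially the same route as the paper: Colbois--Dodziuk metrics $h_i$ of volume one with $\lambda_2(\Sigma,h_i)\to\infty$, a complete metric on $\Sigma\times\mathbb{R}$ restricting to the products $dt^2+h_i$ on disjoint blocks, short cylinders $\Omega_i$ of width $\sim 1/\sqrt{\lambda_2(h_i)}$, and Lemma~\ref{calculation} to get $\sigma_2(\Omega_i)=\sqrt{\lambda_2(h_i)}\tanh(1)\to\infty$. You merely make explicit two points the paper leaves implicit --- the gluing/completeness of the ambient metric and the verification that $I(\Omega_i)=2/(2L_i)^{n/(n+1)}\to\infty$ --- which is a welcome addition but not a different argument.
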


\begin{proof}
  Let $\Sigma$ be a closed manifold of dimension $\geq 3$. The first
  author and Dodziuk~\cite{cd} proved the existence of a sequence
  $h_i$ of Riemannian metrics of volume one such that
  $\lim_{i\rightarrow\infty}\lambda_2(\Sigma,h_i)=\infty.$
  Without loss of generality, we assume for each $i$ that
  $\lambda_2(\Sigma,h_i)>1.$
  
  \medskip
  Consider the cylinder
  $\Omega_i=\Sigma\times [i,i+L_i]$
  with
  $$1>L_i= \frac{1}{\sqrt{\lambda_2(\Sigma,h_i)}}\rightarrow 0\mbox{
    as } i\rightarrow\infty.$$
  
  \medskip
  Let $g$ be a complete Riemannian metric on $\Sigma\times\mathbb{R}$
  such that the restriction of $g$ to $\Omega_i$ is the product of
  $h_i$ with the Euclidean metric on $\mathbb{R}$.
  It follows from Lemma~\ref{calculation} that
  \begin{gather*}
    \sigma_2(\Omega_i)=
    \min\left(\sqrt{\lambda_2(\Sigma,h_i)},\sqrt{\lambda_2(\Sigma,h_i)}\tanh
      (1)\right)
    =\sqrt{\lambda_2(\Sigma,h_i)}\tanh (1).
  \end{gather*}
  
  In particular
  $$
  \lim_{i\rightarrow\infty}\bar\sigma_2(\Omega_i)=\infty.
  $$
\end{proof}

\begin{prop}
  There exists a complete three-dimensional Riemannian manifold $N$
  admitting a sequence of bounded domains $\Omega_i\subset N$ such
  that 
  \begin{gather*}
    \lim_{i\rightarrow\infty}\overline{\sigma}_2(\Omega_i)=\infty,
    \mbox{ and }\lim_{i\rightarrow\infty}I(\Omega_i)=\infty.
  \end{gather*}
\end{prop}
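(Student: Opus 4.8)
The plan is to imitate the product-cylinder construction of the preceding proposition, the only genuinely new difficulty being that the cross-section must now be a surface, for which the eigenvalue input of Colbois--Dodziuk~\cite{cd} is unavailable. Recall from Lemma~\ref{calculation} that on a product $\Sigma\times[-L,L]$ the first nonzero Steklov eigenvalue is $\sigma_2=\min\bigl(1/L,\sqrt{\lambda_2}\tanh(\sqrt{\lambda_2}L)\bigr)$, and a short computation shows that after normalization $\bar\sigma_2$ is controlled, up to a universal factor, by $\sqrt{\lambda_2(\Sigma)\,|\Sigma|}$. This quantity is scale invariant, so it cannot be enlarged by shrinking a fixed surface; and by the Yang--Yau/Hersch bound $\lambda_2\,|\Sigma|\le 8\pi(\gamma+1)$ it stays bounded on any surface of fixed genus $\gamma$. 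The essential new ingredient is therefore a sequence of closed surfaces $(\Sigma_i,h_i)$ of area $1$ with genus $\gamma_i\to\infty$ and $\lambda_2(\Sigma_i,h_i)\to\infty$; such surfaces exist (for instance hyperbolic surfaces with a uniform spectral gap, whose area grows linearly with the genus, rescaled to unit area). This plays exactly the role that~\cite{cd} played in the previous proposition.

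With this input in hand, I would fix $L_i=1/\sqrt{\lambda_2(\Sigma_i,h_i)}\to 0$ and set $\Omega_i=\Sigma_i\times[-L_i,L_i]$ with the product metric $h_i\oplus dt^2$. By Lemma~\ref{calculation} the two competing eigenvalues are $1/L_i=\sqrt{\lambda_2(\Sigma_i,h_i)}$ and $\sqrt{\lambda_2(\Sigma_i,h_i)}\tanh(1)$, so that
\begin{gather*}
  \sigma_2(\Omega_i)=\sqrt{\lambda_2(\Sigma_i,h_i)}\,\tanh(1).
\end{gather*}
Since $|\partial\Omega_i|=2$ and $|\Omega_i|=2L_i$, this gives $\bar\sigma_2(\Omega_i)=\sqrt{2}\,\tanh(1)\sqrt{\lambda_2(\Sigma_i,h_i)}\to\infty$ and $I(\Omega_i)=2^{1/3}\lambda_2(\Sigma_i,h_i)^{1/3}\to\infty$, which are precisely the two conclusions sought.

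It remains to house all of these pieces inside one fixed complete three-manifold $N$. Here I would build $N$ as an infinite chain $P_1\cup C_1\cup P_2\cup C_2\cup\cdots$ arranged along a ray, where $P_i$ is a product block isometric to $\Sigma_i\times[-L_i-\varepsilon_i,\,L_i+\varepsilon_i]$, so that $\Omega_i$ sits in its interior with a product collar on each side, and $C_i$ is any smooth Riemannian cobordism from $\Sigma_i$ to $\Sigma_{i+1}$ agreeing with the adjacent product metrics on collars. Such cobordisms exist for every $i$ because any two closed surfaces are cobordant, and a chain of countably many compact blocks produces a connected, boundaryless, metrically complete noncompact manifold. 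Since the Steklov problem on $\Omega_i$ involves only the metric on $\overline{\Omega_i}$, which is the product metric, Lemma~\ref{calculation} applies verbatim to each $\Omega_i\subset N$ and the proposition follows.

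The hard part is the eigenvalue input of the first paragraph: unlike in dimension $\ge 4$, one cannot keep the topology of the cross-section fixed, and the whole construction hinges on letting the genus tend to infinity to defeat the Yang--Yau bound. Once that sequence of surfaces is secured, the assembly of the $\Omega_i$ into a single complete $N$ is only topological bookkeeping, and the computation of $\bar\sigma_2$ and $I(\Omega_i)$ is immediate from Lemma~\ref{calculation}.
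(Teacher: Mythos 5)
Your proposal is correct and takes essentially the same route as the paper: the paper likewise starts from unit-area closed surfaces $\Sigma_i$ with $\lambda_2(\Sigma_i)\to\infty$ (necessarily of growing genus), forms cylinders $\Sigma_i\times[0,L_i]$ with $L_i=1/\sqrt{\lambda_2(\Sigma_i)}$, reads off $\sigma_2$ from Lemma~\ref{calculation}, and then assembles one complete $N$ by joining the products $\Sigma_i\times\mathbb{R}$ by tubes, where you instead chain product blocks by cobordisms. The one slip is in your assembly: a chain along a ray starting at $P_1$ leaves the left boundary component of $P_1$ exposed, so it is not boundaryless as claimed --- cap it with a handlebody (and, say, insert unit-length product collars between blocks to guarantee completeness); this is a trivial fix, at the same level of informality as the paper's own ``joining by tubes.''
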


\begin{proof}
  It is well known that there exists a sequence of Riemann surfaces of
  volume one 
  $\Sigma_i$ such that $\lambda_2(\Sigma_i)\rightarrow\infty$
  (see~\cite{colboisElSoufi}).
  We consider  the complete Riemannian manifold
  $N_i= \Sigma_i\times \mathbb R$ (with the product Riemannian metric)
  and the subset    
  $$
  \Omega_i=\Sigma_i \times [0,L_i]
  $$  
  
  with $L_i= \frac{1}{\sqrt{\lambda_2(\Sigma_i)}}$.
  As before, we see that
  $\lim_{i\rightarrow\infty}\bar\sigma_2(\Omega_i) =\infty.$
  The manifold $N$ is obtained by joining the $N_i$'s by tubes.
\end{proof}

 \begin{prop}
   Let $M$ be a compact manifold of dimension $\geq 4$. There exists a
   sequence of Riemannian metric $g_i$ and a domain $\Omega\subset M$
   such that
  \begin{gather*}
      \lim_{i\rightarrow\infty}\overline{\sigma}_2(\Omega,g_i)=\infty,
      \mbox{ and }\lim_{i\rightarrow\infty}I(\Omega,g_i)=\infty.
    \end{gather*}
\end{prop}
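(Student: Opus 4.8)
The plan is to fix once and for all a single domain $\Omega\subset M$ diffeomorphic to a product cylinder $\Sigma\times[-1,1]$, and then to reproduce the ``shrinking high-frequency cylinder'' of the two preceding propositions purely by varying the metric $g_i$ rather than by moving the domain (which is impossible on a fixed compact $M$). Since $\dim M=n+1\geq 4$, the cross-section $\Sigma$ will be a closed manifold of dimension $n\geq 3$; the simplest admissible choice is $\Sigma=S^{n}$. Concretely, I would embed a closed ball in $M$ and let $\Omega$ be a spherical shell sitting inside it, so that $\Omega$ is diffeomorphic to $S^{n}\times[-1,1]$ and $\partial\Omega$ is the disjoint union of two copies of $S^{n}$. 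Fix a diffeomorphism $\phi:\Omega\to\Sigma\times[-1,1]$.

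First I would invoke the Colbois--Dodziuk construction~\cite{cd}, available precisely because $n\geq 3$, to obtain metrics $h_i$ on $\Sigma$ of volume one with $\lambda_2(\Sigma,h_i)\to\infty$. Setting $L_i=1/\sqrt{\lambda_2(\Sigma,h_i)}\to 0$, I would define $g_i$ on $\Omega$ to be the pullback under $\phi$ of the metric $h_i\oplus L_i^2\,ds^2$, which makes $(\Omega,g_i)$ isometric to the cylinder $(\Sigma\times[-L_i,L_i],\,h_i\oplus dt^2)$. Since the Steklov spectrum of $(\Omega,g_i)$ depends only on $g_i\vert_{\bar\Omega}$, I may extend $g_i$ to a smooth metric on all of $M$ in any convenient way, for instance by interpolating with a fixed background metric in a collar of $\partial\Omega$; the extension is irrelevant to the computation.

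The verification is then immediate from Lemma~\ref{calculation} applied with $L=L_i$. Because $\sqrt{\lambda_2(\Sigma,h_i)}\,L_i=1$, the lemma gives $\sigma_2(\Omega,g_i)=\tanh(1)\sqrt{\lambda_2(\Sigma,h_i)}$, exactly as in the product propositions. As $\partial\Omega$ consists of two unit-volume copies of $\Sigma$, we have $\vert\partial\Omega\vert_{g_i}=2$ for every $i$, so the normalized eigenvalue is $\bar\sigma_2(\Omega,g_i)=\sigma_2\,\vert\partial\Omega\vert^{1/n}=2^{1/n}\tanh(1)\sqrt{\lambda_2(\Sigma,h_i)}\to\infty$. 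For the isoperimetric ratio, $\vert\Omega\vert_{g_i}=\mathrm{vol}(\Sigma,h_i)\cdot 2L_i=2L_i\to 0$ while $\vert\partial\Omega\vert_{g_i}=2$, hence $I(\Omega,g_i)=2/(2L_i)^{n/(n+1)}\to\infty$, giving both conclusions at once.

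The only genuinely nontrivial point is conceptual rather than computational: one must encode, on a single fixed domain of a fixed compact manifold, the effect that the earlier propositions obtained by letting the domain migrate to infinity or by changing the ambient manifold. This forces the growing Laplace spectrum to be carried entirely by the metrics $g_i$, and it is exactly here that $\dim M\geq 4$ enters. The cross-section must have dimension at least $3$ for the Colbois--Dodziuk metrics to exist, since for a surface cross-section $\lambda_2\cdot\mathrm{area}$ stays bounded; this is precisely the obstruction that makes the three-dimensional case require a different construction. The smooth extension of $g_i$ across $\partial\Omega$ is routine and, as noted, does not affect the spectrum.
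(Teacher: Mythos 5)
Your proof is correct and takes essentially the same approach as the paper's: fix a domain $\Omega\subset M$ diffeomorphic to $\mathbb{S}^n\times I$, give it (via pullback and rescaling) the product of a Colbois--Dodziuk unit-volume metric $h_i$ with an interval of length $L_i=1/\sqrt{\lambda_2(\mathbb{S}^n,h_i)}$, extend the metric arbitrarily to $M$, and read off $\sigma_2$ and $I(\Omega,g_i)$ from Lemma~\ref{calculation}. You simply make explicit the steps the paper leaves implicit (the isometric rescaling onto a fixed domain, the irrelevance of the extension, and the computations of $\bar\sigma_2$ and of the isoperimetric ratio), all of which are accurate.
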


\begin{proof}
  Let $\Omega$ be any domain of $M$ that is diffeomorphic to the
  cylinder $(0,1)\times\mathbb{S}^n$. Because $n\geq 3$, there exists
  a sequence of Riemannian metric $h_i$ on $\mathbb{S}^n$ such that
  $\lim_{i\rightarrow}\lambda_2(\mathbb{S}^n,h_i)=\infty$.
  Let $g_i$ be a Riemannian metric on $M$ such that the restriction of
  $g_i$ to $\Omega$ is isometric to product
  $\mathbb{S}^n\times(0,L_i)$ with
  $L_i= \frac{1}{\sqrt{\lambda_2(\mathbb{S}^n,h_i)}}$.
\end{proof}

It follows from scaling invariance of the normalized eigenvalues that
in each of the three previous examples, the Riemannian metrics can be
chosen to have Ricci curvature arbitrarily close to zero.

\bibliographystyle{bibliostyle}
\bibliography{biblioCEG}

\def\cprime{$'$} \def\cprime{$'$} \def\cprime{$'$}
\begin{thebibliography}{10}

\bibitem{band}
C.~Bandle, \textit{Isoperimetric inequalities and applications}, Monographs and
  Studies in Mathematics, vol.~7, Pitman (Advanced Publishing Program), Boston,
  Mass., 1980.

\bibitem{brock}
F.~Brock, \textit{An isoperimetric inequality for eigenvalues of the {S}tekloff
  problem}, ZAMM Z. Angew. Math. Mech., \textbf{81} (2001), 69--71.

\bibitem{cldr}
A.-P. Calder{\'o}n, \textit{On an inverse boundary value problem}, dans Seminar
  on {N}umerical {A}nalysis and its {A}pplications to {C}ontinuum {P}hysics
  ({R}io de {J}aneiro, 1980), 65--73, Soc. Brasil. Mat., Rio de Janeiro, 1980.

\bibitem{cha2}
I.~Chavel, \textit{Isoperimetric inequalities}, Cambridge Tracts in
  Mathematics, vol. 145, Cambridge University Press, Cambridge, 2001.

\bibitem{cd}
B.~Colbois et J.~Dodziuk, \textit{Riemannian metrics with large {$\lambda_1$}},
  Proc. Amer. Math. Soc., \textbf{122} (1994), 905--906.

\bibitem{colboisElSoufi}
B.~Colbois et A.~El~Soufi, \textit{Extremal eigenvalues of the {L}aplacian in a
  conformal class of metrics: the `conformal spectrum'}, Ann. Global Anal.
  Geom., \textbf{24} (2003), 337--349.

\bibitem{ceg1}
B.~Colbois, A.~El~Soufi et A.~Girouard, \textit{Isoperimetric control of the
  spectrum of hypersurfaces}, to appear,  (2010).

\bibitem{croke}
C.~B. Croke, \textit{Some isoperimetric inequalities and eigenvalue estimates},
  Ann. Sci. \'Ecole Norm. Sup. (4), \textbf{13} (1980), 419--435.

\bibitem{esco1}
J.~F. Escobar, \textit{The geometry of the first non-zero {S}tekloff
  eigenvalue}, J. Funct. Anal., \textbf{150} (1997), 544--556.

\bibitem{esco2}
J.~F. Escobar, \textit{An isoperimetric inequality and the first {S}teklov
  eigenvalue}, J. Funct. Anal., \textbf{165} (1999), 101--116.

\bibitem{esco3}
J.~F. Escobar, \textit{A comparison theorem for the first non-zero {S}teklov
  eigenvalue}, J. Funct. Anal., \textbf{178} (2000), 143--155.

\bibitem{foxkut}
D.~W. Fox et J.~R. Kuttler, \textit{Sloshing frequencies}, Z. Angew. Math.
  Phys., \textbf{34} (1983), 668--696.

\bibitem{fraschoen}
A.~Fraser et R.~Schoen, \textit{The first {S}teklov eigenvalue, conformal
  geometry, and minimal surfaces}, to appear in Advances in Math,  (2011).

\bibitem{gp}
A.~Girouard et I.~Polterovich, \textit{On the {H}ersch-{P}ayne-{S}chiffer
  estimates for the eigenvalues of the {S}teklov problem}, Funktsional. Anal. i
  Prilozhen., \textbf{44} (2010), 33--47.

\bibitem{gp2}
A.~Girouard et I.~Polterovich, \textit{Shape optimization for low {N}eumann and
  {S}teklov eigenvalues}, Math. Methods Appl. Sci., \textbf{33} (2010),
  501--516.

\bibitem{gyn}
A.~Grigor{\cprime}yan, Y.~Netrusov et S.-T. Yau, \textit{Eigenvalues of
  elliptic operators and geometric applications}, dans Surveys in differential
  geometry. {V}ol. {IX}, Surv. Differ. Geom., IX, 147--217, Int. Press,
  Somerville, MA, 2004.

\bibitem{gunning}
R.~C. Gunning, \textit{Lectures on {R}iemann surfaces, {J}acobi varieties},
  Princeton University Press, Princeton, N.J., 1972.

\bibitem{hps}
J.~Hersch, L.~E. Payne et M.~M. Schiffer, \textit{Some inequalities for
  {S}tekloff eigenvalues}, Arch. Rational Mech. Anal., \textbf{57} (1975),
  99--114.

\bibitem{kokrein}
N.~D. Kopachevsky et S.~G. Krein, \textit{Operator approach to linear problems
  of hydrodynamics. {V}ol. 1}, Operator Theory: Advances and Applications, vol.
  128, Birkh\"auser Verlag, Basel, 2001.

\bibitem{kvr}
N.~Korevaar, \textit{Upper bounds for eigenvalues of conformal metrics}, J.
  Differential Geom., \textbf{37} (1993), 73--93.

\bibitem{LaTaUh}
M.~Lassas, M.~Taylor et G.~Uhlmann, \textit{The {D}irichlet-to-{N}eumann map
  for complete {R}iemannian manifolds with boundary}, Comm. Anal. Geom.,
  \textbf{11} (2003), 207--221.

\bibitem{payne}
L.~E. Payne, \textit{Some isoperimetric inequalities for harmonic functions},
  SIAM J. Math. Anal., \textbf{1} (1970), 354--359.

\bibitem{shamma}
S.~E. Shamma, \textit{Asymptotic behavior of {S}tekloff eigenvalues and
  eigenfunctions}, SIAM J. Appl. Math., \textbf{20} (1971), 482--490.

\bibitem{stek}
W.~Stekloff, \textit{Sur les probl\`emes fondamentaux de la physique
  math\'ematique}, Ann. Sci. \'Ecole Norm. Sup. (3), \textbf{19} (1902),
  191--259.

\bibitem{taylorPDEII}
M.~E. Taylor, \textit{Partial differential equations. {II}}, Applied
  Mathematical Sciences, vol. 116, Springer-Verlag, New York, 1996.

\bibitem{wangxia2}
Q.~Wang et C.~Xia, \textit{Sharp bounds for the first non-zero {S}tekloff
  eigenvalues}, J. Funct. Anal., \textbf{257} (2009), 2635--2644.

\bibitem{wein}
R.~Weinstock, \textit{Inequalities for a classical eigenvalue problem}, J.
  Rational Mech. Anal., \textbf{3} (1954), 745--753.

\bibitem{xia}
C.~Xia, \textit{Rigidity of compact manifolds with boundary and nonnegative
  {R}icci curvature}, Proc. Amer. Math. Soc., \textbf{125} (1997), 1801--1806.

\end{thebibliography}

\end{document}